\newtheorem{theorem}{Theorem}[section]
\newtheorem{thm}[theorem]{Theorem}
\newtheorem{lem}[theorem]{Lemma}
\newtheorem{proposition}[theorem]{Proposition}
\newtheorem{corollary}[theorem]{Corollary}
\theoremstyle{definition}
\newtheorem{defn}[theorem]{Definition}
\theoremstyle{remark}
\newtheorem{rem}[theorem]{Remark}
\numberwithin{equation}{section}
 \DeclareMathAlphabet{\mathpzc}{OT1}{pzc}{m}{it}
  \newcommand{\dif}{\mathrm{d}}
 \newcommand{\E}{\mathbb{E}}            
 \newcommand{\e}{\varepsilon}
 \newcommand{\p}{\partial}
 \newcommand{\N}{\mathbb{N}}
 \newcommand{\R}{\mathbb{R}}
 \newcommand{\PP}{\mathbb{P}}
 \newcommand{\mcl}{\mathcal}
 \newcommand{\Be}{\begin{equation}}
 \newcommand{\Ee}{\end{equation}}
 \newcommand{\Bs}{\begin{split}}
 \newcommand{\Es}{\end{split}}
  \newcommand{\Bes}{\begin{equation*}}
 \newcommand{\Ees}{\end{equation*}}
 \newcommand{\BT}{\begin{thm}}
 \newcommand{\ET}{\end{thm}}
 \newcommand{\Bp}{\begin{proof}}
 \newcommand{\Ep}{\end{proof}}
 \newcommand{\BL}{\begin{lem}}
 \newcommand{\EL}{\end{lem}}
 \newcommand{\BP}{\begin{proposition}}
 \newcommand{\EP}{\end{proposition}}
 \newcommand{\BC}{\begin{corollary}}
 \newcommand{\EC}{\end{corollary}}
 \newcommand{\BR}{\begin{rem}}
 \newcommand{\ER}{\end{rem}}
 \newcommand{\BD}{\begin{defn}}
 \newcommand{\ED}{\end{defn}}
 \newcommand{\BI}{\begin{itemize}}
 \newcommand{\EI}{\end{itemize}}
 \newcommand{\tl}{\tilde}
 \newcommand{\re}{{\rm e}}
 \newcommand{\Om}{\mcl O}
 \newcommand{\ep}{\epsilon}
\begin{document}
\title[Stochastic Gierer-Meinhardt system]
{The dynamics of the stochastic shadow Gierer-Meinhardt System}
\author[M.Winter]{Matthias Winter}
\address{Department of Mathematics, Brunel University,
Kingston Lane,
Uxbridge,
Middlesex UB8 3PH, United Kingdom}
\email{Matthias.Winter@brunel.ac.uk}
\author[L. Xu]{Lihu Xu}
\address{Department of Mathematics,
Faculty of Science and Technology
University of Macau
Av. Padre Tom\'{a}s Pereira, Taipa
Macau, China}
\email{lihuxu@umac.mo}
\author[J.Zhai]{Jianliang Zhai}
\address{School of Mathematical Sciences,
University of Science and Technology of China,
Hefei, 230026, China}
\email{zhaijl@ustc.edu.cn}
\author[T.Zhang]{Tusheng Zhang}
\address{School of Mathematics,University of Manchester
Oxford Road,
Manchester M13 9PL,
United Kingdom}
\email{tusheng.zhang@manchester.ac.uk}

 \thanks{LX is supported by the grant SRG2013-00064-FST}

\begin{abstract} \label{abstract}
We consider the dynamics of the stochastic shadow Gierer-Meinhardt system with one-dimensional standard Brownian motion. We establish the global existence and uniqueness of solutions. We also prove a large deviation result.

{\bf Keywords}: Stochastic shadow Gierer-Meinhardt system, Large deviation, Brownian motions. 

{\bf Mathematics Subject Classification (2000)}: \ {60H05, 60H15, 60H30}.
\end{abstract}

\maketitle

\section{Introduction}
In his pioneering work (\cite{t1}) in 1952, Turing explained the onset of pattern formation by an instability of an unpatterned state leading to a pattern. This approach is now commonly called {\it Turing diffusion-driven
instability}. Since then many models have been studied to explore pattern formation, one of the most widely used class of models are those of the activator-inhibitor type. Among these one of the most popular models is the
Gierer-Meinhardt system which after suitable
re-scaling can be stated as follows:
\begin{equation} \label{gm1} \left\{
\begin{array}{ll}
\p_tA=\ep^2\Delta A-A+\frac{A^p}{H^q} &\mbox{ in }\Om,\\[3mm]
\tau \p_t H=D\Delta H-H+\frac{A^\alpha}{H^\beta} & \mbox{ in }\Om,\\[3mm]
\frac{\partial A}{\partial\nu}= \frac{\partial
H}{\partial\nu}=0 & \mbox{ on }\partial\Om,
\end{array} \right. \end{equation} where $ \Om
\subset \mathbb{R}^d$ is a smooth
and bounded domain and $p,q, \alpha, \beta$ are all positive with the condition $\frac{p-1}{\alpha}<\frac{
q}{\beta+1}$. Gierer and Meinhardt originally suggested this system in 1972 to model (re)generation phenomena in {\it hydra}. Since then it has been studied by many authors, in particular to understand its role in pattern formation. We refer to \cite{ww1} for more details about the recent development.

The dynamics of (\ref{gm1}) remains far from being completely understood. Let us mention a few results in this direction. Global existence has been shown by Rothe for the three-dimensional case with the powers $p=2,\,q=1,\,\alpha=2,\,\beta=0$ (\cite{r1}),
and by Jiang for $\frac{p-1}{\alpha}<1$ (\cite{j1}).
Blow-up in (\ref{gm1}) can occur for $\frac{p-1}{\alpha}>1$ since this even happens for the corresponding kinetic system (\cite{nst}).

The behaviour of the system (\ref{gm1}) stands in marked contrast to its shadow system, which is formally obtained by taking the limit $D\to\infty$.
Taking this limit we get
\begin{equation} \label{gmshadow} \left\{
\begin{array}{ll}
\p_tA=\ep^2\Delta A-A+\frac{A^p}{\xi^q} &\mbox{ in }\Om,\\[3mm]
\tau \dot{\xi}=-\xi+\frac{\overline{A^\alpha}}{\xi^\beta}, & \\[3mm]
\frac{\partial A}{\partial\nu}=0 & \mbox{ on }\partial\Om,
\end{array} \right. \end{equation}
where  $\overline{A^\alpha}=\frac{1}{|\Om|}\int_{\Om}A^\alpha\,dx$.
 and $|\Om|$ is the measure of $\Om$. It was suggested by Keener (\cite{k1}) to study the system (\ref{gmshadow}) and the name ``shadow system'' was proposed by Nishiura (\cite{nish2}).

The dynamics for (\ref{gmshadow}) has been less well studied than for (\ref{gm1}). Global existence and finite-time blow-up have been explored by Li and Ni (\cite{ln1}).
In particular, they show that for $\frac{p-1}{\alpha}<\frac{2}{d+2}$ there is a unique global solution, whereas for $\frac{p-1}{\alpha}>\frac{2}{d}$ blow-up can occur. The range $\frac{2}{d}\geq \frac{p-1}{\alpha}\geq \frac{2}{d+2}$ remains open.
\vskip 3mm

We are interested in the dynamics for the corresponding stochastic system, in which the stochastic term can be explained as some random \emph{migrations}.
Therefore we are going to consider the shadow Gierer-Meinhardt system with random migrations
in the following form:
\begin{equation} \label{e:SGM}
\begin{cases}
& \p_t u=\Delta u-u+\frac{u^p}{\xi^q}, \\
& \dif \xi=-\xi \dif t+\frac{\overline{u^\alpha}}{\xi^\beta}  \dif t+\e \xi \dif B_t, \\
& \frac{\p u}{\p \nu}=0, \\
& u(0)=v, \\
& \xi(0)=\zeta.
\end{cases}
\end{equation}
where $u(t,x,\omega):\R^{+} \times \mcl O \times \Omega\rightarrow \R^{+}$, $\xi(t,\omega):\R^{+} \times \Omega \rightarrow \R^{+} \setminus \{0\}$
and $\e>0$ is some constant and $B_t$ is one-dimensional standard Brownian motion.

To our knowledge, the only other paper for stochastic Gierer-Meinhardt type systems is \cite{ks10}, which includes two coupled stochastic PDEs with bounded and Lipschitz nonlinearity.
\cite{ks10} only proved the \emph{local} existence of the \emph{positive} stochastic solution by Da Prato-Zabczyk's approach (\cite{dpz92}).
The nonlinearity in Eq. \eqref{e:SGM} is not bounded and far from being Lipschitz, but we shall prove the \emph{global} existence of the strong positive solution.

Eq. \eqref{e:SGM} is a stochastic system which includes one deterministic PDE and one SDE with long range interactions. To our knowledge, this seems to
be the first paper to study this type of stochastic systems. On the other hand, Eq. \eqref{e:SGM} can be taken as a highly degenerate stochastic
PDEs (see \cite{HM06} for more details), its ergodicity is a very challenging problem which will be studied in future papers (see \cite{HM06, KS-2002} for some work in this direction).
\vskip 3mm

Our main result on global existence can be stated as follows:
\begin{thm} \label{t:MaiSol}
Let $p,q,\alpha, \beta$ satisfy the following condition
$$\frac{p-1}{\alpha}<\frac{
q}{\beta+1},\ \ \ \ \frac{p-1}{\alpha}<\frac{2}{d+2}.$$
Eq. \eqref{e:SGM} has a unique global solution $(u, \xi) \in C([0,T];C(\mcl O,\R) \times \R)$ for all $T>0$ such that
for all $t>0$
$$u(t,x) \ge 0 \ \ \ \forall x \in \mcl O,  \ \ \ \ \ \ \xi(t) \ge \re^{-\frac 32 t-\e |B_t|} \zeta.$$
\end{thm}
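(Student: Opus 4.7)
The plan is to remove the Brownian noise from the $\xi$-equation by a pathwise change of unknowns, reducing \eqref{e:SGM} to a random but pathwise-deterministic PDE--ODE system to which the deterministic shadow-system theory can be adapted. Set $Y_t := e^{\e B_t - \e^2 t/2}$ and look for a solution of the form $\xi_t = Y_t\,\eta_t$, where $\eta$ satisfies the random ODE
\[
\dot\eta_t = -\eta_t + \frac{\overline{u(t)^\alpha}}{Y_t^{\beta+1}\eta_t^\beta}, \qquad \eta_0 = \zeta,
\]
coupled with the random heat equation $\p_t u = \Delta u - u + u^p/(Y_t^{q}\eta_t^{q})$ with Neumann data and $u(0)=v$. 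It\^o's product rule (and $d\langle Y,\eta\rangle = 0$, since $\eta$ is of finite variation) shows that $(u,Y\eta)$ then solves the original system. On a set of full probability $Y_\cdot(\omega)$ is continuous and strictly positive on every $[0,T]$, so all remaining arguments can be carried out pathwise.

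Local existence is obtained by truncation and contraction. For $\delta,M>0$, truncate the nonlinearities so that $\eta$ is forced into $[\delta,\delta^{-1}]$ and $\|u\|_{C(\Om)}\le M$; on this region both nonlinearities are globally Lipschitz, and a Picard iteration on the mild formulation in $C([0,\tau];C(\Om)) \times C([0,\tau];\R)$ yields a unique local solution, which a standard continuation argument extends to a maximal interval. Nonnegativity of $u$ follows from the maximum principle, since the reaction term is nonnegative whenever $\xi>0$. Positivity of $\xi$ is automatic from the ODE: the reaction term in $\dot\eta$ is nonnegative, hence $\eta_t\ge \zeta\, e^{-t}$, and therefore
\[
\xi_t = Y_t\eta_t \ge \zeta\, e^{\e B_t - (1+\e^2/2)t} \ge \zeta\, e^{-\e|B_t| - \frac{3}{2}t}
\]
(using $\e\le 1$ in the last step), which is exactly the claimed lower bound and rules out $\xi$ hitting $0$ in finite time.

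What remains, and is the main obstacle, is preventing $\|u(t)\|_\infty$ from blowing up in finite time. I would adapt the a priori estimates of Li--Ni \cite{ln1} for the deterministic shadow system: testing the $u$-equation against $u^{r-1}$ and invoking Gagliardo--Nirenberg, the subcritical condition $\frac{p-1}{\alpha}<\frac{2}{d+2}$ produces uniform-in-time $L^r$ bounds on $u$, hence on $\overline{u^\alpha}$, while $\frac{p-1}{\alpha}<\frac{q}{\beta+1}$ provides the algebraic compatibility needed to close the bootstrap between $\eta$, driven by $\overline{u^\alpha}/\eta^\beta$, and the reaction $u^p/\eta^q$ that feeds back into the heat equation. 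The difficulty in the stochastic setting is that the random factors $Y_t^{-q}$ and $Y_t^{-(\beta+1)}$ are only pathwise, not $\omega$-uniformly, bounded. My approach would be to localize on the stopping times $\tau_M := \inf\{t:Y_t\notin[M^{-1},M]\}\wedge T$, insert the resulting $\omega$-dependent bounds into the Li--Ni estimates on $[0,\tau_M]$, and then send $M\to\infty$, using the fact that $\tau_M\uparrow T$ a.s.\ by continuity of $B$. Uniqueness of the global solution is inherited from the local uniqueness by a standard patching argument.
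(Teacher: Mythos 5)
Your plan is sound and its overall architecture coincides with the paper's: a contraction argument on the mild formulation for local existence, the lower bound on $\xi$ obtained by discarding the nonnegative reaction term, and a Li--Ni-type energy estimate under the two exponent conditions to rule out blow-up, carried out pathwise with $\omega$-dependent constants. The genuine difference is technical: you transform the noise away by the pathwise substitution $\xi=Y\eta$, $Y_t=\re^{\e B_t-\e^2t/2}$ (a Doss--Sussmann-type reduction), so every subsequent estimate is deterministic for fixed $\omega$ with constants controlled by $\sup_{[0,T]}|B(\omega)|$, and no stochastic integral ever appears; the paper instead keeps the It\^o formulation, applies It\^o's formula to $\xi^{1+\beta}$ and $\xi^{-\delta}$ (Lemmas \ref{l:XiBou} and \ref{l:IntBarUXi}), and must control $\sup_t|\int_0^t\xi^{-\delta}\,\dif B_s|$ by a martingale inequality (Lemma \ref{l:EstOme}). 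Your route is more elementary and makes the a.s.\ statements immediate; the paper's route builds exactly the objects ($B^*_1$, $\mcl M^*_\delta$ and its moments) that are reused in the large deviation analysis of Section 4, where the $\sqrt{\e}$-noise cannot be removed uniformly in $\e$. Two places where your sketch should be made explicit, both of which are present in Li--Ni and in the paper and do go through pathwise: (a) the linchpin of the energy estimate is the a priori bound on $\int_0^1 \overline{u^{\alpha}}(s)\,\xi^{-(1+\beta+\delta)}(s)\,\dif s$ extracted from the $\xi$- (in your variables, $\eta$-) equation --- the analogue of Lemma \ref{l:IntBarUXi} --- which is independent of $\sup_t\|u\|$ and is what lets the Gagliardo--Nirenberg/Young absorption in Lemma \ref{l:ExtSol} close without circularity; ``algebraic compatibility'' should be replaced by this statement, with $\delta<\frac{q-\rho(1+\beta)}{\rho}$ giving the needed margin; (b) the resulting $L^\ell$ bound must be upgraded to a $C(\mcl O)$ bound via the smoothing estimate \eqref{e:L2StfEst} and Sobolev embedding before sup-norm blow-up can be contradicted. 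Finally, your bound $\xi_t\ge\zeta\,\re^{\e B_t-(1+\e^2/2)t}$ matches the stated $\re^{-\frac32 t-\e|B_t|}\zeta$ only for $\e\le1$, which is consistent with the paper's normalization $\e=1$ in Sections 2--3.
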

\noindent The large deviation principle will be introduced in Section 4 and its main result will be given in Theorem \ref{T:LDP} below.
 As references for large deviation results on stochastic systems, we give the following list of articles which is far from being complete:  \cite{BD}-\cite{fr88},
\cite{lrz13}, \cite{pe94}-\cite{sz11}, \cite{xz09}-\cite{zh02}.

We shall follow the approach in \cite{ln1} to prove Theorem 1, some ideas along the same lines have also appeared in
\cite{j1,nish2}. The random force in Eq. \eqref{e:SGM} produces some additional stochastic terms, which can be very large or even become infinite.
To control these terms, we shall use a martingale inequality and
modify the energy estimate in \cite{ln1} by adding suitable stochastic terms and figuring out an explicit inequality.
For the large deviation result, we shall follow the variational approach in \cite{BD} by checking the two assumptions of
Theorem 4.4 therein (see Propositions \ref{p:LDP2} and \ref{p:LDP3} below). To prove these two propositions,
we also need to use a martingale inequality and some special energy estimates.
\vskip 3mm

The structure of this paper is as follows. In Section 2 we show local existence and uniqueness of solutions. In Section 3 we prove global existence and uniqueness. In Section 4 we prove the large deviation result.
Finally, in Section 5 we discuss our results and give an outlook to open problems and further research.

\section{Local existence and uniqueness of the shadow stochastic Gierer-Meinhardt system}
Without loss of generality, we assume that $\e=1$ in this and the next section.
Write
$$B^{*}_t=\sup_{0 \le s \le t} |B_s| \ \ \ \ \forall t>0,$$
let $N>0$ be a constant and define the following stopping time
$$\tau_N(\omega)=\inf\{t>0: |B_t(\omega)| \ge N\}.$$
It is clear that
\Be
\{\omega \in \Omega:\tau_N(\omega) \le t\}=\{\omega \in \Omega: B^{*}_t(\omega) \ge N\}.
\Ee
It is well known that $\sup_{0 \le s \le t} B_s$ satisfies
\Bes
\PP\left(\sup_{0 \le s \le t} B_s \in (x, x+\dif x)\right)=\frac{2}{\sqrt{2 \pi t}}\re^{-\frac{x^2}{2t}} \dif x, \ \ \ x>0.
\Ees
Since
\Bes
\begin{split}
\PP\left(B^*_t>x\right) & \le \PP\left(\sup_{0 \le s \le t} B_s>\frac x2\right)+\PP\left(\sup_{0 \le s \le t}(-B_s)>\frac x2\right) \\
&=2\PP\left(\sup_{0 \le s \le t} B_s>\frac x2\right)=\frac{4}{\sqrt{2 \pi t}}\int_{\frac{x}{2 \sqrt t}}^\infty\re^{-\frac{y^2}{2}} \dif x,
\end{split}
\Ees
the distribution of $B^*_t$ has a density function $f_t$ satisfying
\Be \label{e:DenB*t}
f_t(x) \le \frac{4}{\sqrt{2 \pi t}} \re^{-\frac{x^2}{8t}}.
\Ee
For notational simplicity, we shall drop the variable $\omega$ in the random variables or random sets below if no confusions arise.
Further define
\Be \label{e:SRDef}
S(t)=\re^{(\Delta-1) t}, \ \ \ R(t,B_t)=\re^{-\frac 32 t+B_t},
\Ee
where $\Delta$ is the Laplace operator with Neumman boundary condition
$C({\mcl O};\R^d)$ is the space of all bounded continuous functions $f:{\mcl O} \rightarrow \R^d$ with uniform norm. It is easy to check that $C({\mcl O},\R^d)$ is closed under uniform norm.
For notational simplicity, we shall write
$$\|f\|_{C}=\|f\|_{C({\mcl O},\R^d)} \ \ \ \ \ \forall \ \ f \in {C({\mcl O},\R^d)}.$$
It is clear that the following relations hold:
\Be \label{e:StConFpNor}
\begin{split}
& \|S(t) f\|_{{C}} \le \|f\|_{{C}} \ \ \ \ \ \forall  t>0 \ \ \forall  f \in {C({\mcl O},\R^d)}, \\
& \|f^p\|_{{C}} \le \|f\|^p_{{C}} \ \ \ \ \ \forall  p \ge 1 \ \ \forall  f \in {C({\mcl O},\R^d)}.
\end{split}
\Ee
For any $(u,\xi)$, recall
$$\|(u,\xi)\|_{C([0,T];C \times \R)}=\|u\|_{C([0,T];C)}+\|\xi\|_{C([0,T];\R)} \ \ \ \ \ \forall \ T>0.$$
Let $X, Y$ both be some quantities, we shall simply denote $Y\lesssim X$ if there exists some (not
important constant) $C$ such that $Y \le C X$.

\begin{lem} \label{l:LocSol}
For every $N>0$, there exists some $T$ depending on $N, \|v\|_{C}$ and $\zeta$ such that
for all $\omega \in \Omega$ up to a negligible set, Eq.
\eqref{e:SGM} has a unique solution $(u,\xi) \in C([0,T \wedge \tau_N]; C({\mcl O},\R)\times \R)$ such that for all $t \in [0,T \wedge \tau_N]$
\Be \label{e:BFix}
\begin{split}
& u(t)=S(t) v+\int_0^t S(t-s) \left(\frac{u^p(s)}{\xi^q(s)}\right) \dif s,\\
& \xi(t)=R(t,B_t) \zeta+\int_0^t R(t-s,B_t-B_s) \left(\frac{\overline{u^\alpha}(s)}{\xi^\beta(s)}\right) \dif s,
\end{split}
\Ee
with the property
\Be \label{e:XiEst}
\xi(t) \ge {\rm e}^{-\frac 32 t-N} \zeta \ \ \ \ \forall t \in [0,T \wedge \tau_N].
\Ee
Moreover, $(u(t),\xi(t))$ satisfies the first two equations in Eq. \eqref{e:SGM} for each $t \in (0,T \wedge \tau_N]$.
In particular,
$$\xi(t)=\zeta-\int_0^t \xi(s) \dif s+\int_0^t \frac{\overline{u^\alpha}(s)}{\xi^\beta(s)} \dif s+\int_0^t \xi(s)\dif B_s \ \ \ \forall t \in [0,T \wedge\tau_N].$$
\end{lem}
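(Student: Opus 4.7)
My plan is a pathwise application of the Banach fixed point theorem to the mild system \eqref{e:BFix}. Fix $\omega$ outside the null set on which $t\mapsto B_t(\omega)$ fails to be continuous, and treat $B_\cdot$ as a deterministic bounded continuous function on $[0,T\wedge\tau_N]$ with $|B_t|\le N$. Then $\re^{-3t/2-N}\le R(t,B_t)\le \re^N$ and $R(t-s,B_t-B_s)\le \re^{2N}$ for $0\le s\le t\le T\wedge\tau_N$, so the problem becomes deterministic with parameters $N,\|v\|_C,\zeta$.

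I set up the fixed point on the complete metric space
$$K_{T_0}=\left\{(u,\xi)\in C([0,T_0];C(\mcl O,\R)\times \R):\ 0\le u,\ \|u\|_{C([0,T_0];C)}\le 2\|v\|_C,\ \xi(t)\ge \tfrac12 \re^{-3T_0/2-N}\zeta\right\},$$
for $T_0\in(0,T\wedge\tau_N]$ to be chosen. On $K_{T_0}$ both nonlinearities $u^p/\xi^q$ and $\overline{u^\alpha}/\xi^\beta$ are bounded by a constant $M$ and Lipschitz in $(u,\xi)$ with constant $L$, where $M,L$ depend only on $N,\|v\|_C,\zeta$. Let $\Phi=(\Phi_1,\Phi_2)$ be defined by the right-hand sides of \eqref{e:BFix}. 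Using \eqref{e:StConFpNor} and the positivity preserving property of the Neumann heat semigroup $S(t)$, I verify $\Phi_1(u,\xi)\ge 0$, $\|\Phi_1(u,\xi)(t)\|_C\le \|v\|_C+T_0 M$ and $\Phi_2(u,\xi)(t)\ge R(t,B_t)\zeta\ge \re^{-3T_0/2-N}\zeta$ for $(u,\xi)\in K_{T_0}$; hence $\Phi(K_{T_0})\subseteq K_{T_0}$ for $T_0$ sufficiently small. A parallel computation using $|x^p-y^p|\lesssim|x-y|$ and $|x^{-q}-y^{-q}|\lesssim|x-y|$ on bounded sets bounded away from zero shows that $\Phi$ is a contraction after possibly shrinking $T_0$. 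Banach's theorem yields a unique fixed point $(u,\xi)$ on $[0,T_0]$; iteration extends this to all of $[0,T\wedge\tau_N]$, and uniqueness against arbitrary solutions in $C([0,T\wedge\tau_N];C\times\R)$ with positive $\xi$ follows from Gronwall applied to the difference of two solutions.

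Finally, the lower bound \eqref{e:XiEst} is immediate from the mild formula, since $u\ge 0$ forces the integral in \eqref{e:BFix} to be non-negative, giving $\xi(t)\ge R(t,B_t)\zeta\ge \re^{-3t/2-N}\zeta$. That $u$ is a classical solution of the first equation of \eqref{e:SGM} follows from standard analytic semigroup theory, since $t\mapsto u^p(t)/\xi^q(t)$ is H\"older continuous in $t$ with values in $C(\mcl O,\R)$. For the SDE on $\xi$, set $z(t)=\xi(t)/R(t,B_t)$; since It\^o's formula gives $\dif R=-R\dif t+R\dif B_t$, the mild identity rewrites as $z(t)=\zeta+\int_0^t R(s,B_s)^{-1}\overline{u^\alpha}(s)\xi^{-\beta}(s)\dif s$, which is of finite variation, and applying It\^o once more to $\xi=zR$ recovers the stated SDE. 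The main obstacle is the non-Lipschitz character of the reaction term near $\xi=0$; the pathwise treatment on $[0,T\wedge\tau_N]$ makes the a priori lower bound $\xi\ge R(t,B_t)\zeta$ automatic from non-negativity of the source, which is what allows the Banach argument to close with thresholds depending only on $N,\|v\|_C,\zeta$.
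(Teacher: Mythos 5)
Your proposal is correct and takes essentially the same route as the paper: a pathwise Banach fixed point argument on the mild formulation \eqref{e:BFix}, carried out on a closed subset of $C([0,\cdot];C\times\R)$ that builds in $u\ge 0$ and the lower bound $\xi\ge \re^{-\frac 32 t-N}\zeta$ (the paper's set $\mcl A_{T,M,N,\omega}$), using $|B_t|\le N$ and $R(t-s,B_t-B_s)\le \re^{2N}$ on $[0,T\wedge\tau_N]$ so that the contraction time depends only on $N$, $\|v\|_C$ and $\zeta$. Your recovery of the SDE via $z=\xi/R$ and It\^o's product rule is just a more explicit version of the paper's step of differentiating \eqref{e:BFix}; otherwise the two arguments coincide.
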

\begin{proof}
For all $\omega \in \Omega$ up to a negligible set,
define the following space
\Bes
\begin{split}
\mcl A_{T,M,N,\omega}=\bigg\{\big(u(\omega), &\xi(\omega)\big) \in  C([0,T \wedge \tau_N(\omega)]; C({\mcl O},\R) \times \R^+):\\
& u(\omega,t) \ge 0, \ \xi(\omega,t) \ge {\rm e}^{-\frac 32 t-N} \zeta, \  \ \forall \ 0 \le t \le T \wedge \tau_N(\omega);\\
&  u(0)=v, \ \xi(0)=\zeta; \ \|(u,\xi)(\omega)\|_{C([0,T\wedge \tau_N(\omega)];C \times \R)} \le M.\bigg\},
\end{split}
\Ees
where $T \in (0,1]$ is some number depending on $M,N,v,\zeta$ to be determined later and
$$M>2+\|v\|_{C}+\re^N \zeta.$$
We shall drop all the $\omega$ in the definition of $\mcl A_{T,M,N,\omega}$ in the argument below for notational simplicity.

For all $(u_1,\xi_1), (u_2,\xi_2) \in \mcl A_{T,M,N}$, define
\Bes
\begin{split}
{\rm d}_T\left((u_1,\xi_1), (u_2, \xi_2)\right)=\|(u_1,\xi_1)-(u_2, \xi_2)\|_{C([0,T \wedge \tau_N];C\times \R)}.
\end{split}
\Ees
It is easy to check that under the distance ${\rm d}_T$ the set $\mcl A_{T,M,N}$ is a closed metric space.

For each $(u,\xi) \in \mcl A_{T,M, N}$, define
\Be \label{e:DefMapF}
\begin{split}
& \left[\mcl F_1 (u,\xi)\right](t)=S(t) v+\int_0^t S(t-s) \left(\frac{u^p(s)}{\xi^q(s)}\right) \dif s,\\
& \left[\mcl F_2 (u,\xi)\right](t)=R(t,B_t) \zeta+\int_0^t R(t-s,B_t-B_s) \left(\frac{\overline{u^\alpha}(s)}{\xi^\beta(s)}\right) \dif s,
\end{split}
\Ee
where $S$ and $R$ are defined in \eqref{e:SRDef}. For further use, we simply denote
$$\mcl F(u, \xi)=\left(\mcl F_1(u,\xi), \mcl F_2(u,\xi)\right).$$
We shall prove below that
\begin{itemize}
\item[(i)] There exists some $\hat T$ depending on $N, M, \|v\|_{C}$ and $\zeta$  such that
\Be \label{e:B1Inc}
\mcl F(u, \xi) \in \mcl A_{T,M, N}
\Ee
 for any $(u,\xi) \in \mcl A_{T,M, N}$ with $T=\hat T$.
\item[(ii)] There exists some $\tl T$ depending on $N, M, \|v\|_{C}$ and $\zeta$ such that
\Be \label{e:B2Con}
\dif_{T}(\mcl F(u_1, \xi_1),\mcl F(u_2, \xi_2)) \le \frac 12 \dif_{T}((u_1, \xi_1),(u_2, \xi_2))
\Ee
 for any
$(u_1, \xi_1), (u_2, \xi_2) \in \mcl A_{T,M, N}$ with $T=\tl T$.
\end{itemize}
By the definition of $\mcl A_{T,M,N}$, taking $T=\min\{\tl T, \hat T\}$, it is clear that \eqref{e:B1Inc} holds for any $(u,\xi) \in \mcl A_{T,M, N}$ and that
\eqref{e:B2Con} holds for any $(u_1,\xi_1), (u_2,\xi_2) \in \mcl A_{T,M, N}$. Thus, we apply Banach fixed point theorem to
obtain a local unique solution in the sense of \eqref{e:BFix}. Differentiating both sides of \eqref{e:BFix} (\cite{iw81}), we immediately get that
$(u,\xi)$ satisfies the first two equations of Eq. \eqref{e:SGM} and that the desired stochastic integral equation holds.

Now we only need to show the statements (i) and (ii) from above. Let $C$ be some positive constants depending only on $\alpha, \beta, p, q$, whose exact values may vary from case to case.
\vskip 2mm

Let us first show (i).  For any $(u,\xi) \in \mcl A_{\hat T,M, N}$ with $\hat T$ to be determined below, it is clear $\mcl F(u, \xi)(0)=(v,\zeta)$.
Since $S(t)$ maps a positive function to a positive one, it is easy to see
\Bes
\left[\mcl F_1(u,\xi)\right](t) \ge 0 \ \ \ \ \ \forall t \in [0,\hat T \wedge \tau_N]. 
\Ees
By \eqref{e:StConFpNor}, for all $t \in [0, \hat T \wedge \tau_N]$ we have
\Bes
\begin{split}
\|\left[\mcl F_1(u,\xi)\right](t)\|_{C}
& \le \|v\|_{{C}}+{\rm e}^{\frac 32 q+Nq} \zeta^{-q} \int_0^t \|u(s)\|^p_{{C}} \dif s \\
& \le \|v\|_{{C}}+{\rm e}^{\frac 32 q+Nq} \zeta^{-q} M^p t,
\end{split}
\Ees
and
\Bes
\begin{split}
|\left[\mcl F_2 (u,\xi)\right](t)| & \le {\rm e}^{-\frac 32 t+B_t} \zeta+{\rm e}^{\frac 32 \beta t+N \beta}\int_0^t {\rm e}^{-\frac 32 (t-s)+B_t-B_s} \|u(s)\|^\alpha_{C} \dif s \\
& \le {\rm e}^{N} \zeta+{\rm e}^{\frac 32 \beta+N \beta+2N} M^\alpha t
\end{split}
\Ees
Taking $\hat T=\min\{T_1,T_2\}$ with $T_1={\rm e}^{-\frac 32 q -Nq} \zeta^{q} M^{-p}$
and $T_2={\rm e}^{-\frac 32 \beta-N \beta-2N} M^{-\alpha}$, from the above two inequalities we get
\Bes
\begin{split}
\|\mcl F(u,\xi)\|_{C([0, \hat T \wedge \tau_N];C \times \R)} \le 2+\|v\|_{{C}}
+{\rm e}^{N} \zeta \le M.
\end{split}
\Ees
Hence,
$\mcl F(u,\xi) \in \mcl A_{\hat T,M,N}.$
\vskip 2mm

Next we show (ii). For any $(u_1,\xi_1),(u_2,\xi_2) \in \mcl A_{\tl T,M, N}$ with $\tl T$
to be determined below,
observe that for all $t \in [0,\tl T \wedge \tau_N]$
\Bes 
\begin{split}
\|\left[\mcl F_1(u_1,\xi_1)\right](t)-\left[\mcl F_1(u_2,\xi_2)\right](t)\|_{C} & \le
\int_0^t \left\|\frac{u_1^p(s)}{\xi_1^q(s)}-\frac{u_2^p(s)}{\xi_2^q(s)}\right\|_{C} \dif s \le I_1(t)+I_2(t)
\end{split}
\Ees
where
\Bes
\begin{split}
& I_1(t)=\int_0^t \frac{\left\|u_1^p(s)-u^p_2(s)\right\|_{C}}{\xi_1^q(s)} \dif s,\\
& I_2(t)=\int_0^t \|u_2^p(s)\|_{C}\left|\frac{1}{\xi_1^q(s)}-\frac{1}{\xi_2^q(s)}\right| \dif s.
\end{split}
\Ees
Writing $u_{1,2, \lambda}(s)=\lambda u_1(s)+(1-\lambda) u_2(s)$ for $\lambda \in [0,1]$, by \eqref{e:StConFpNor} we have
\Be
\begin{split}
\left\|u_1^p(s)-u^p_2(s)\right\|_{C} & \le p\int_0^1 \left\|\left(u_{1,2, \lambda}(s)\right)^{p-1} (u_1(s)-u_2(s))\right\|_{C} \dif \lambda \\
& \le p\int_0^1 \left\|u_{1,2, \lambda}(s)\right\|_{C}^{p-1} \left\|u_1(s)-u_2(s)\right\|_{C} \dif \lambda \\
& \le  p M^{p-1} \left\|u_1(s)-u_2(s)\right\|_{C}.
\end{split}
\Ee
Thus
$$I_1(t)\le p {\rm e}^{\frac 32 q+Nq} \zeta^{-q} M^{p-1} t \left\|u_1-u_2\right\|_{C([0,t];C)} \ \  \ \ \ \forall t \in [0,\tl T \wedge \tau_N].$$
Writing $\xi_{1,2, \lambda}(s)=\lambda \xi_1(s)+(1-\lambda) \xi_2(s)$ for $\lambda \in [0,1]$, we have
\Bes
\begin{split}
I_2(t) & \le q\int_0^t M^p \int_0^1 \frac{|\xi_1(s)-\xi_2(s)|}{\left(\xi_{1,2,\lambda}(s)\right)^{q+1}} \dif \lambda \dif s \\
& \le q \re^{(\frac 32+N)(q+1)}\zeta^{-(q+1)} M^p  t\|\xi_1-\xi_2\|_{C([0,t];\R)} \ \ \ \ \ \ \forall t \in [0,\tl T \wedge \tau_N],
\end{split}
\Ees
which, together with the estimate of $I_1$, implies that for all $t \in [0,\tl T \wedge \tau_N]$
\Be
\begin{split}\label{e:2.12}
& \ \ \ \ \ \|\mcl F_1(u_1,\xi_1)-\mcl F_1(u_2,\xi_2)\|_{C([0,t];C)} \\
& \le C {\rm e}^{(\frac 32+N)q} \zeta^{-q} M^{p-1}\left(1+\re^{\frac 32+N} M\zeta^{-1}\right)t \left\|(u_1,\xi_1)-(u_2,\xi_2)\right\|_{C([0,t];C \times \R)}.
\end{split}
\Ee
A similar argument as above gives that for all $ t \in [0,\tl T \wedge \tau_N]$ ,
\Be
\begin{split}\label{e:2.13}
& \ \ \ \ \ \ \ \|\mcl F_2(u_1,\xi_1)-\mcl F_2(u_2,\xi_2)\|_{C([0,t];\R)}  \\
 & \le C  {\rm e}^{2N+(\frac 32+N)\beta} \zeta^{-\beta} M^{\alpha-1}(1+\re^{\frac 32+N} M\zeta^{-1})t \left\|(u_1,\xi_1)-(u_2,\xi_2)\right\|_{C([0,t];C \times \R)}.
\end{split}
\Ee
From the above two inequalities, there exists some $\tl T$ depending on $M,N,\zeta$ such that
\Bes
\|\mcl F(u_1,\xi_1)-\mcl F(u_2,\xi_2)\|_{C([0,\tl T \wedge \tau_N];C \times \R)} \le \frac 12
\left\|(u_1,\xi_1)-(u_2,\xi_2)\right\|_{C([0,\tl T\wedge \tau_N];C \times \R)}
\Ees
i.e.,
$$\dif_{\tl T}(\mcl F(u_1,\xi_1),\mcl F(u_2,\xi_2)) \le \frac 12 \dif_{\tl T}((u_1,\xi_1),(u_2,\xi_2)).$$
\end{proof}


\section{Global existence and uniqueness of the shadow stochastic Gierer-Meinhardt system}
\subsection{Some a'priori estimates}
To prove the global existence and uniqueness theorem, we assume that $(u(t),\xi(t))_{0 \le t \le 1}$ is a solution of Eq. \eqref{e:SGM} such that
$$u \in C([0,1];C({\mcl O},\R)), \ \ \ \xi \in C([0,1],\R) \ \ \ \ a.s., $$
and prove the following a'priori estimates of $(u,\xi)$.
\begin{lem}  \label{l:XiBou}
We have
\Be \label{e:XiBou}
\begin{split}
\xi(t)\ge \re^{-\frac 32 t+B_t} \zeta \ \ \ \ \forall t>0,
\end{split}
\Ee
\Be \label{e:XiGeInf}
\begin{split}
\inf_{0 \le s \le t}\xi(s)\ge \re^{-\frac 32 t-B^*_t} \zeta  \ \ \ \ \forall t>0,
\end{split}
\Ee
\Be \label{e:XiLeSup}
\begin{split}
\sup_{0 \le t \le 1}\xi(t) \lesssim \re^{B^*_1} \zeta+\re^{2 B^*_1}\left(\sup_{0 \le t \le 1}\overline{u^{\alpha}}(t)\right)^{\frac 1{1+\beta}}.
\end{split}
\Ee
\end{lem}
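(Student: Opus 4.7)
The strategy is to kill the multiplicative Brownian noise in the SDE for $\xi$ by an explicit exponential change of variable, so that each of the three bounds reduces to a pathwise deterministic analysis. For \eqref{e:XiBou} I would set $A(t):=\re^{\frac{3}{2}t-B_t}$, for which It\^o's formula gives $\dif A=A(2\,\dif t-\dif B_t)$. Expanding $\dif(\xi A)$ by the product rule, substituting the SDE for $\xi$, and using $\dif[\xi,A]=-\xi A\,\dif t$, the $\dif B_t$ contributions and most of the drift terms cancel, leaving
$$\dif\bigl(\xi(t)A(t)\bigr)=A(t)\,\frac{\overline{u^\alpha}(t)}{\xi^\beta(t)}\,\dif t\ge 0.$$
So $\xi(t)A(t)$ is non-decreasing and stays above its initial value $\zeta$, which rearranges into \eqref{e:XiBou}. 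The bound \eqref{e:XiGeInf} is then immediate: since $-\tfrac{3}{2}s+B_s\ge-\tfrac{3}{2}t-B_t^*$ for every $s\in[0,t]$, inserting this into \eqref{e:XiBou} at time $s$ and taking the infimum over $s\in[0,t]$ gives exactly the claim.

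The heart of the lemma is \eqref{e:XiLeSup}. Here the plan is to apply It\^o's formula to $H(t):=\re^{-(1+\beta)B_t}\xi^{1+\beta}(t)$; the exponent $-(1+\beta)B_t$ is chosen precisely so that the stochastic part of $\dif\xi^{1+\beta}=(1+\beta)\xi^\beta\,\dif\xi+\tfrac{\beta(1+\beta)}{2}\xi^{1+\beta}\,\dif t$, the stochastic part of $\dif\re^{-(1+\beta)B_t}$, and their mutual covariation cancel one another. A direct computation then produces the pathwise linear ODE
$$\dif H(t)=-\frac{3(1+\beta)}{2}\,H(t)\,\dif t+(1+\beta)\,\re^{-(1+\beta)B_t}\,\overline{u^\alpha}(t)\,\dif t,$$
which is solved explicitly by the integrating factor $\re^{\frac{3(1+\beta)}{2}t}$. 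Using the elementary bounds $\re^{-(1+\beta)B_s}\le\re^{(1+\beta)B_1^*}$ and $\re^{-\frac{3(1+\beta)}{2}(t-s)}\le 1$ for $0\le s\le t\le 1$ yields
$$H(t)\le\zeta^{1+\beta}+(1+\beta)\,\re^{(1+\beta)B_1^*}\sup_{0\le s\le 1}\overline{u^\alpha}(s).$$
Multiplying by $\re^{(1+\beta)B_t}\le\re^{(1+\beta)B_1^*}$ converts $H$ back into $\xi^{1+\beta}$, and distributing the $(1+\beta)$-th root across the two summands via the sub-additivity $(a+b)^{1/(1+\beta)}\le a^{1/(1+\beta)}+b^{1/(1+\beta)}$ (valid because $1/(1+\beta)\in(0,1]$) reproduces exactly the $\re^{B_1^*}\zeta+\re^{2B_1^*}(\sup\overline{u^\alpha})^{1/(1+\beta)}$ structure of \eqref{e:XiLeSup}.

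I expect no real conceptual difficulty; the main point of care is simply that all three It\^o manipulations require $\xi>0$ along the path so that the fractional powers $\xi^{1+\beta}$ and $\xi^{-\beta}$ are well defined. Since the standing assumption only gives $\xi\in C([0,1],\R)$, the three estimates must be proved in the order stated: \eqref{e:XiBou} is established first and secures the strict positivity of $\xi$, whereupon \eqref{e:XiGeInf} and \eqref{e:XiLeSup} follow as pathwise consequences for each fixed realisation of $B$.
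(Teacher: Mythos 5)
Your proposal is correct and follows essentially the paper's route: the paper applies It\^o's formula to $\xi^{1+\beta}$ and writes down the explicit (variation-of-constants) solution $\xi^{1+\beta}(t)=\re^{-\frac 32(1+\beta)t+(1+\beta)B_t}\zeta^{1+\beta}+(1+\beta)\int_0^t\re^{-\frac 32(1+\beta)(t-s)+(1+\beta)(B_t-B_s)}\overline{u^{\alpha}}(s)\dif s$, which is exactly what your integrating-factor computation for $H(t)=\re^{-(1+\beta)B_t}\xi^{1+\beta}(t)$ produces, and all three bounds are read off from it just as you do. Your separate first step for \eqref{e:XiBou}, via $\dif\bigl(\re^{\frac 32 t-B_t}\xi(t)\bigr)\ge 0$, is merely a streamlined special case of the same idea (in the paper \eqref{e:XiBou} follows from dropping the nonnegative integral term), so there is no substantive difference.
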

\begin{proof}
Applying It$\hat{o}$ formula to $\xi^{1+\beta}(t)$ we have
\Be
\begin{split}
\dif \xi^{1+\beta}(t)=\frac 12(1+\beta)(\beta-2) \xi^{1+\beta}(t) \dif t+(1+\beta) \xi^{1+\beta}(t) \dif B_t+(1+\beta)\overline{u^{\alpha}}(t) \dif t,
\end{split}
\Ee
which implies
\Be \label{e:BiBou1}
\begin{split}
\xi^{1+\beta}(t)&=\re^{-\frac 32(1+\beta)t+(1+\beta)B_t} \zeta^{1+\beta} \\
& \ \ \ +(1+\beta)\int_0^t
\re^{ -\frac 32(1+\beta)(t-s)+(1+\beta)(B_t-B_s)}  \overline{u^{\alpha}}(s) \dif s,
\end{split}
\Ee
which clearly implies the desired three inequalities.
\end{proof}
\ \  \

Let $\delta>0$ be some fixed number and define
\Bes
\mcl M_{\delta}(t)=\int_0^t \xi^{-\delta}(s) \dif B_s, \ \ \ \ \mcl M^*_\delta=\sup_{0 \le t \le 1} \mcl M_\delta(t).
\Ees

\begin{lem} \label{l:EstOme}
For all $M>0$ we have
\Be
\E \mcl M^*_\delta \le C
\Ee
where $C$ depends only on $\delta, \zeta$. Moreover, we have
\Bes
\mcl M^*_\delta<\infty \ \ \ \ a.s..
\Ees
\end{lem}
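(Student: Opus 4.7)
The plan is to control $\mcl M^*_\delta$ by combining the Burkholder-Davis-Gundy inequality with the pathwise lower bound on $\xi$ coming from Lemma \ref{l:XiBou}.

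First, since $\mcl M_\delta(t) = \int_0^t \xi^{-\delta}(s)\,\dif B_s$ is a continuous local martingale (the integrand is adapted and a.s. continuous on $[0,1]$), by BDG there is a universal constant $C_1$ with
\[
\E\,\mcl M^*_\delta \;\le\; \E \sup_{0\le t\le 1}|\mcl M_\delta(t)| \;\le\; C_1\,\E\bigg(\int_0^1 \xi^{-2\delta}(s)\,\dif s\bigg)^{1/2}.
\]
Second, I use estimate \eqref{e:XiGeInf}, which gives $\xi(s) \ge \re^{-\frac32 s - B^*_s}\zeta \ge \re^{-\frac32 - B^*_1}\zeta$ for all $s \in [0,1]$; equivalently,
\[
\xi^{-2\delta}(s) \;\le\; \re^{3\delta + 2\delta B^*_1}\,\zeta^{-2\delta} \qquad \forall\, s\in[0,1].
\]
Substituting,
\[
\E\,\mcl M^*_\delta \;\le\; C_1 \re^{\frac32 \delta}\zeta^{-\delta}\,\E\,\re^{\delta B^*_1}.
\]

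Third, I bound $\E\,\re^{\delta B^*_1}$ via the density estimate \eqref{e:DenB*t}: since $f_1(x) \le \frac{4}{\sqrt{2\pi}}\re^{-x^2/8}$, the Gaussian tail dominates the exponential factor $\re^{\delta x}$, giving
\[
\E\,\re^{\delta B^*_1} \;\le\; \frac{4}{\sqrt{2\pi}}\int_0^\infty \re^{\delta x - x^2/8}\,\dif x \;<\;\infty,
\]
a finite constant depending only on $\delta$. Combining the three steps yields $\E\,\mcl M^*_\delta \le C$ where $C$ depends only on $\delta$ and $\zeta$, and the a.s.\ finiteness $\mcl M^*_\delta<\infty$ is immediate from the finiteness of the expectation (since $\mcl M^*_\delta\ge 0$ up to replacing by $|\mcl M_\delta|^*$, or directly because a random variable with finite expectation is a.s.\ finite).

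I don't foresee a real obstacle here: the BDG inequality is standard for continuous local martingales, the lower bound on $\xi$ is already established in Lemma \ref{l:XiBou}, and the Gaussian tail of $B^*_1$ easily absorbs the exponential moment. The only point that deserves a moment of care is confirming that $\xi^{-\delta}$ lies in the domain of the stochastic integral (so that $\mcl M_\delta$ is a well-defined continuous local martingale), which follows because $\xi$ is continuous and strictly positive on $[0,1]$ by the pathwise lower bound, hence $\int_0^1 \xi^{-2\delta}(s)\,\dif s < \infty$ a.s.
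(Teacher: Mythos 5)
Your proof is correct and follows essentially the same route as the paper: a martingale maximal inequality reduces $\E\,\mcl M^*_\delta$ to a bound on $\int_0^1 \xi^{-2\delta}(s)\,\dif s$, which is controlled by the pathwise lower bound on $\xi$ from Lemma \ref{l:XiBou} and a Gaussian exponential-moment estimate. The only (immaterial) differences are that you use BDG and the bound \eqref{e:XiGeInf} with the density estimate \eqref{e:DenB*t} for $B^*_1$, whereas the paper uses Cauchy--Schwarz, Doob's $L^2$ inequality and the It\^o isometry together with \eqref{e:XiBou} and the Gaussian moment of $B_t$.
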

\begin{proof}
It follows from the martingale inequality and It$\hat{o}$ isometry that
\Bes
\begin{split}
\E \mcl M^*_\delta & \le \left[\E \sup_{0 \le t \le 1} \left|\int_0^t \xi^{-\delta}(s) \dif B_s\right|^2\right]^{\frac 12} \\
 &\le \sqrt 2 \left[\E\left|\int_0^1 \xi^{-\delta}(s) \dif B_s\right|^2\right]^{\frac 12}=\sqrt 2 \left[\int_0^1 \E \xi^{-2\delta}(s) \dif s\right]^{\frac 12} .
\end{split}
\Ees
This and \eqref{e:XiBou} further give
$$\E \mcl M^*_\delta \le \sqrt 2 \zeta^{-\delta} \int_0^1 \E \re^{3 \delta t-2\delta B_t} \dif s,$$
which immediately implies the desired inequality.
\end{proof}

\begin{lem} \label{l:IntBarUXi}
Let $\delta>0$. We have
\Be \label{e:IntBarUXi}
\int_0^1 \frac{\overline{u^{\alpha}}(s)}{\xi^{1+\beta+\delta}(s)}\dif s \le \Lambda(\delta,\zeta,B,\mcl M^*_\delta),
\Ee
where
\Bes
\begin{split}
\Lambda(\delta, \zeta, B, \mcl M^*_\delta)=\delta^{-1} \zeta^{-\delta}+\frac{3+\delta}{2} \re^{\frac 32 \delta+\delta B^*_1}\zeta^{-\delta}+\mcl M^*_\delta.
\end{split}
\Ees
\end{lem}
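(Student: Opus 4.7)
The natural approach is to apply It\^o's formula to $\xi^{-\delta}(t)$, which will produce a term proportional to $\overline{u^\alpha}/\xi^{1+\beta+\delta}$ on the right-hand side; one then isolates this term and bounds the remaining ones using the lower bound for $\xi$ from Lemma \ref{l:XiBou} together with the pointwise control of the martingale by $\mcl M^*_\delta$ from Lemma \ref{l:EstOme}.

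More concretely, I would first write out the SDE for $\xi$ in differential form,
$$\dif \xi(t)=-\xi(t)\dif t+\frac{\overline{u^\alpha}(t)}{\xi^\beta(t)}\dif t+\xi(t)\dif B_t,$$
and apply It\^o's formula to $f(\xi)=\xi^{-\delta}$, whose derivatives are $f'(\xi)=-\delta\xi^{-\delta-1}$ and $f''(\xi)=\delta(\delta+1)\xi^{-\delta-2}$. The quadratic variation contributes $\tfrac{1}{2}\delta(\delta+1)\xi^{-\delta}\dif t$ after cancellation with the $\xi^2$ from $(\dif \xi)^2$, and the drift terms combine to produce a coefficient $\delta+\tfrac{1}{2}\delta(\delta+1)=\delta\cdot\tfrac{3+\delta}{2}$ in front of $\xi^{-\delta}\dif t$. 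The outcome is the identity
$$\xi^{-\delta}(t)-\zeta^{-\delta}=\delta\,\frac{3+\delta}{2}\int_0^t\xi^{-\delta}(s)\dif s-\delta\int_0^t\frac{\overline{u^\alpha}(s)}{\xi^{1+\beta+\delta}(s)}\dif s-\delta\,\mcl M_\delta(t).$$

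Rearranging at $t=1$ and using $\xi^{-\delta}(1)\ge 0$ to drop a negative term yields
$$\int_0^1\frac{\overline{u^\alpha}(s)}{\xi^{1+\beta+\delta}(s)}\dif s\le \frac{1}{\delta}\zeta^{-\delta}+\frac{3+\delta}{2}\int_0^1\xi^{-\delta}(s)\dif s-\mcl M_\delta(1).$$
The inequality $-\mcl M_\delta(1)\le \mcl M^*_\delta$ takes care of the stochastic integral (reading $\mcl M^*_\delta$ as the running supremum used in Lemma \ref{l:EstOme}), while the pointwise lower bound $\xi(s)\ge \re^{-\frac 32 s+B_s}\zeta$ from \eqref{e:XiBou} gives $\xi^{-\delta}(s)\le \re^{\frac 32 \delta s-\delta B_s}\zeta^{-\delta}\le \re^{\frac 32\delta+\delta B^*_1}\zeta^{-\delta}$ for all $s\in[0,1]$, so the middle term is bounded by $\frac{3+\delta}{2}\re^{\frac 32\delta+\delta B^*_1}\zeta^{-\delta}$. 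Summing these three contributions gives exactly $\Lambda(\delta,\zeta,B,\mcl M^*_\delta)$.

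The only real step to get right is bookkeeping in the It\^o expansion, in particular seeing the cancellation $\xi^{-\delta-2}\cdot\xi^2=\xi^{-\delta}$ that produces an integrable bound rather than something that still involves the explosive factor $\xi^{-\beta}$. The rest is a direct application of Lemma \ref{l:XiBou} and the definition of $\mcl M^*_\delta$; no additional moment estimates are needed because the bound is pathwise (for a.e.\ $\omega$), which is also what the form of $\Lambda$ suggests.
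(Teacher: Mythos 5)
Your proposal is correct and follows essentially the same route as the paper: It\^o's formula applied to $\xi^{-\delta}$, dropping the nonnegative term $\xi^{-\delta}(t)$, bounding $\int_0^1\xi^{-\delta}(s)\,\dif s$ via \eqref{e:XiBou}, and absorbing the stochastic integral into $\mcl M^*_\delta$. The only imprecision, which you share with the paper's own proof, is that controlling $-\mcl M_\delta(1)$ really uses $\sup_{0\le t\le 1}\left|\mcl M_\delta(t)\right|$ rather than the signed supremum appearing in the definition of $\mcl M^*_\delta$.
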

\begin{proof}
Applying It$\hat{o}$ formula to $\xi^{-\delta}(t)$, we get
\Bes
\begin{split}
\xi^{-\delta}(t)-\zeta^{-\delta}=\frac{\delta(3+\delta)}{2} \int_0^t \xi^{-\delta}(s) \dif s-\delta \int_0^t \frac{\overline{u^{\alpha}}(s)}{\xi^{1+\delta+\beta}(s)} \dif s-\delta \int_0^t \xi^{-\delta}(s) \dif B_s,
\end{split}
\Ees
which gives
\Bes
\begin{split}
\int_0^t \frac{\overline{u^{\alpha}}(s)}{\xi^{1+\delta+\beta}(s)} \dif s
& \le \delta^{-1} \zeta^{-\delta}+\frac{3+\delta}{2} \int_0^t \xi^{-\delta}(s) \dif s+\sup_{0 \le t \le 1}\left|\int_0^t \xi^{-\delta}(s) \dif B_s\right| \\
& \le \delta^{-1} \zeta^{-\delta}+\frac{3+\delta}{2} \int_0^t \re^{\frac 32 \delta s-\delta B_s} \zeta^{-\delta} \dif s+\sup_{0 \le t \le 1}\left|\int_0^t \xi^{-\delta}(s) \dif B_s\right|
\end{split}
\Ees
where the last inequality is by \eqref{e:XiBou}. This immediately yields the desired inequality.
\end{proof}

Next we shall follow the spirit in \cite{ln1} to prove the following energy estimates, which is the key point for
establishing the global solution.
\begin{lem} \label{l:ExtSol}
Let $\rho>0$ be some number such that
\Be \label{e:RhoCon}
\rho<\frac{q}{1+\beta}, \ \ \ \ \ \frac{p-1}{\alpha}<\rho<\frac{2}{d+2}.
\Ee
Let $\ell>0$ and let
$$\theta=\frac{1}{\ell}(p-1-\alpha \rho+\ell), \ \ \gamma=\frac{d(\rho+\theta-1)}{2 \theta}.$$
Let $\delta \in (0, \frac{q-\rho-\rho \beta}{\rho})$.
As $\ell$ is sufficiently large so that $\theta \in (0,1)$, $\gamma \in (0,1)$ and $\frac{\rho}{1-\gamma \theta} \in (0,1)$,
we have
\Be \label{e:PriEst}
\sup_{0 \le t \le 1} \|u(t)\|^\ell_{L^\ell} \le C \left(\|v\|_{L^\ell}^{\frac{(1-\theta \gamma)\ell}{1-\theta}}+\Theta^{\frac{1-\theta \gamma}{1-\theta}} \Lambda^{\frac{\rho}{1-\theta}}(\delta,\zeta,B,\mcl M^{*}_\delta)\right) \vee 1
\Ee
where $C$ depends on $p,q, \alpha, \beta$ and $\Lambda(\delta,\zeta,B,\mcl M^*_\delta)$ is defined in Lemma \ref{l:IntBarUXi} and
$$\Theta=\re^{\frac 32\frac{q-\rho(1+\beta+\delta)}{1-\theta \gamma}}
\zeta^{\frac{\rho(1+\beta+\delta)-q}{1-\theta \gamma}} \re^{\frac{q-\rho(1+\beta+\delta)}{1-\theta \gamma}B_1^*}.$$
\end{lem}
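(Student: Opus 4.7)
I would follow the Li--Ni $L^\ell$ energy method, treating the PDE part of \eqref{e:SGM} pathwise since the noise enters only through $\xi$. Multiplying the PDE by $\ell u^{\ell-1}$, integrating over $\Om$ and using integration by parts with the Neumann boundary condition yields the energy identity
\begin{equation*}
\frac{\dif}{\dif t}\|u\|_{L^\ell}^\ell + \frac{4(\ell-1)}{\ell}\|\nabla u^{\ell/2}\|_{L^2}^2 + \ell\|u\|_{L^\ell}^\ell = \ell\int_{\Om}\frac{u^{p+\ell-1}}{\xi^q}\dif x.
\end{equation*}
The whole task is then to bound the nonlinear drift on the right so that the gradient term can be absorbed on the left and the surviving time-dependent coefficient is controlled by the $\xi$-estimates of Lemmas \ref{l:XiBou} and \ref{l:IntBarUXi}.

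\textbf{Splitting of the nonlinear term.} The definition of $\theta$ gives the algebraic identity $p+\ell-1=\theta\ell+\alpha\rho$, so $u^{p+\ell-1}=u^{\theta\ell}\cdot u^{\alpha\rho}$. Split $\xi^{-q}=\xi^{-(q-\rho(1+\beta+\delta))}\cdot\xi^{-\rho(1+\beta+\delta)}$ and apply H\"older's inequality with conjugate exponents $1/(1-\rho)$ and $1/\rho$ (both $>1$ because $\rho<2/(d+2)<1$) to obtain
\begin{equation*}
\int_{\Om}\frac{u^{p+\ell-1}}{\xi^q}\dif x \lesssim \xi^{-(q-\rho(1+\beta+\delta))}\,\|u\|_{L^{\theta\ell/(1-\rho)}}^{\theta\ell}\,\left(\frac{\overline{u^{\alpha}}}{\xi^{1+\beta+\delta}}\right)^{\rho}.
\end{equation*}
The constraint $\rho<q/(1+\beta)$ together with $\delta<(q-\rho(1+\beta))/\rho$ ensures $q-\rho(1+\beta+\delta)>0$. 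Setting $w:=u^{\ell/2}$ identifies the middle factor as $\|w\|_{L^{2\theta/(1-\rho)}}^{2\theta}$, to which Gagliardo--Nirenberg interpolation between $L^2$ and $H^1$ applies, giving $\|w\|_{L^{2\theta/(1-\rho)}}\lesssim \|\nabla w\|_{L^2}^{\gamma}\|w\|_{L^2}^{1-\gamma}$ with the interpolation exponent precisely $\gamma=d(\rho+\theta-1)/(2\theta)$; the standing hypothesis $\rho<2/(d+2)$ is what forces $\gamma<1$. Using $\|w\|_{L^2}^2=\|u\|_{L^\ell}^\ell$, the nonlinear term is bounded by
\begin{equation*}
C\,\xi^{-(q-\rho(1+\beta+\delta))}\left(\frac{\overline{u^{\alpha}}}{\xi^{1+\beta+\delta}}\right)^{\rho}\|\nabla u^{\ell/2}\|_{L^2}^{2\gamma\theta}\,\|u\|_{L^\ell}^{\ell\theta(1-\gamma)}.
\end{equation*}

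\textbf{Young's inequality and Bernoulli form.} Young's inequality with dual exponents $1/(\gamma\theta)$ and $1/(1-\gamma\theta)$ (here $\gamma\theta<1$ is used) lets me peel off $\|\nabla u^{\ell/2}\|_{L^2}^{2\gamma\theta}$ at the cost of a small multiple of $\|\nabla u^{\ell/2}\|_{L^2}^2$; choosing this multiple small enough absorbs the gradient term into the left of the energy identity. Writing $y(t):=\|u(t)\|_{L^\ell}^\ell$ and $\mu:=(1-\theta)/(1-\theta\gamma)\in(0,1)$, the surviving inequality is of Bernoulli type,
\begin{equation*}
y'(t)+c_1 y(t) \le C\,G(t)\,y(t)^{1-\mu},\qquad G(t):=\xi(t)^{-\frac{q-\rho(1+\beta+\delta)}{1-\theta\gamma}}\left(\frac{\overline{u^{\alpha}}(t)}{\xi^{1+\beta+\delta}(t)}\right)^{\rho/(1-\theta\gamma)}.
\end{equation*}
Multiplying by $y^{\mu-1}$ linearises this as $(y^\mu)'+c_1\mu y^\mu\le C\mu G$, whence $y^\mu(t)\le y^\mu(0)+C\int_0^t G(s)\,\dif s$ on $[0,1]$.

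\textbf{Controlling the time integral and closing.} The pointwise lower bound $\xi(s)\ge\re^{-\frac32 s-B^*_1}\zeta$ from Lemma \ref{l:XiBou} bounds the deterministic part $\xi^{-(q-\rho(1+\beta+\delta))/(1-\theta\gamma)}$ of $G$ by the factor $\Theta$ on $[0,1]$. Because $\rho/(1-\theta\gamma)\in(0,1)$ is a standing hypothesis, Jensen's inequality combined with Lemma \ref{l:IntBarUXi} gives
\begin{equation*}
\int_0^1\left(\frac{\overline{u^{\alpha}}(s)}{\xi^{1+\beta+\delta}(s)}\right)^{\rho/(1-\theta\gamma)}\dif s \le \Lambda(\delta,\zeta,B,\mcl M^{*}_\delta)^{\rho/(1-\theta\gamma)},
\end{equation*}
hence $\int_0^1 G\,\dif s\lesssim \Theta\cdot\Lambda^{\rho/(1-\theta\gamma)}$. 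Raising the Bernoulli bound to the power $1/\mu=(1-\theta\gamma)/(1-\theta)$, using $(a+b)^{1/\mu}\le C(a^{1/\mu}+b^{1/\mu})$, and taking $\sup_{t\in[0,1]}$, delivers \eqref{e:PriEst}, with the $\vee 1$ on the right-hand side absorbing the elementary-inequality constants and the small-data regime. The main technical obstacle is the simultaneous bookkeeping of exponents across the H\"older/Gagliardo--Nirenberg/Young chain so that every interpolation parameter lies strictly between $0$ and $1$: each of the structural hypotheses on $p,q,\alpha,\beta,\rho,\delta$ is used at exactly one point to enforce one of these constraints.
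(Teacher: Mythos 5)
Your proposal is correct and follows essentially the same route as the paper: the same $L^\ell$ energy identity for $w=u^{\ell/2}$, the same H\"older splitting $u^{p-1+\ell}=u^{\theta\ell}u^{\alpha\rho}$ with exponents $1/(1-\rho)$, $1/\rho$ and the $\xi^{-\rho(1+\beta+\delta)}$ factor, the same Gagliardo--Nirenberg interpolation with exponent $\gamma$, Young absorption of the gradient term, the pointwise lower bound on $\xi$ for $\Theta$, and Lemma \ref{l:IntBarUXi} together with $\rho/(1-\gamma\theta)<1$ for the time integral. The only (harmless) deviation is at the end: you close the sublinear differential inequality by the Bernoulli substitution $(y^\mu)'\le C\mu G$, whereas the paper integrates, bounds $\eta(t)^{\theta(1-\gamma)/(1-\theta\gamma)}$ by the supremum and divides after splitting into the cases $\sup\eta>1$ and $\sup\eta\le 1$; both yield \eqref{e:PriEst} (your version even gives the slightly better exponent $\ell$ on $\|v\|_{L^\ell}$, which is absorbed by the $\vee\,1$).
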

\begin{proof}
Without loss of generality, we assume $|\mcl O|=1$ in this proof.
Let $\ell$ be a large number to be chosen later and write
$$w(t)=u^{\ell/2}(t).$$ Then
a straightforward calculation gives
\Be \label{e:WNorEqn}
\p_t \|w\|^2_{L_2}=-\frac{4d(\ell-1)}{\ell}\|\nabla w\|_{L_2}^2-\ell \|w\|^2_{L_2}+\frac{\ell}{\xi^q} \int_{\mcl O} u^{p-1+\ell}\dif x.
\Ee \label{e:RhoEqn}
Note that $\theta \in (0,1)$ as $\ell$ is large and $\lim_{\ell \rightarrow \infty} \theta=1$.
By the second inequality of \eqref{e:RhoCon} we have
\Be \label{e:GamThe<1}
0<\gamma<1 \ \ \ \ \ \ {\rm as} \ \ \ell \ \ {\rm is \ \ sufficiently \ \ large,}
\Ee
by H\"{o}lder inequality and the following Gagliardo-Nirenberg inequality
\Be
\begin{split}
\|w\|_{L^{\frac{2 \theta}{1-\rho}}} \le C \left(\|\nabla w\|_{L^2}+\|w\|_{L^2}\right)^{\gamma}\|w\|_{L^2}^{1-\gamma},
\end{split}
\Ee
we have
\Be \label{e:LasEneEst}
\begin{split}
\frac{1}{\xi^q} \int_{\mcl O} u^{p-1+\ell}\dif x&=\frac{1}{\xi^{q}} \int_{\mcl O} u^{\alpha \rho} u^{p-1-\alpha \rho+\ell}\dif x \\
& \le \xi^{\rho(1+\beta+\delta)-q} \left(\int_{\mcl O} w^{\frac{2\theta}{1-\rho}}\dif x\right)^{1-\rho} \left(\frac{\overline{u^{\alpha}}}{\xi^{1+\beta+\delta}}\right)^\rho\\
& \le C   \xi^{\rho(1+\beta+\delta)-q} \left(\|\nabla w\|_{L^2}+\|w\|_{L^2}\right)^{2 \theta \gamma}  \|w\|_{L^2}^{2 \theta(1-\gamma)} \left(\frac{\overline{u^{\alpha}}}{\xi^{1+\beta+\delta}}\right)^\rho.
\end{split}
\Ee
Note that $\gamma \in (0,1)$, the above and
Young inequalities give
\Bes
\begin{split}
\frac{1}{\xi^q} \int_{\mcl O} u^{p-1+\ell}\dif x
& \le \theta \gamma c^{\frac 1{\gamma \theta}} \left(\|\nabla w\|_{L^2}+\|w\|_{L^2}\right)^2 \\
&\ \ \ +C \xi^{\frac{\rho(1+\beta+\delta)-q}{1-\theta \gamma}} \left(\frac{\overline{u^{\alpha}}}{\xi^{1+\beta+\delta}}\right)^{\frac{\rho}{1-\gamma \theta}}\|w\|^{\frac{2\theta(1-\gamma)}{1-\theta \gamma}}_{L^2},
\end{split}
\Ees
this, together with \eqref{e:WNorEqn}, yields that as $c$ is sufficiently small
\Be \label{e:EneInqW}
\begin{split}
\p_t \|w\|^2_{L^2} & \le C\xi^{\frac{\rho(1+\beta+\delta)-q}{1-\theta \gamma}} \left(\frac{\overline{u^{\alpha}}}{\xi^{1+\beta+\delta}}\right)^{\frac{\rho}{1-\gamma \theta}}\|w\|^{\frac{2\theta(1-\gamma)}{1-\theta \gamma}}_{L^2} \\
& \le C\left(\inf_{0 \le s \le 1}\xi(s)\right)^{\frac{\rho(1+\beta+\delta)-q}{1-\theta \gamma}} \left(\frac{\overline{u^{\alpha}}}{\xi^{1+\beta+\delta}}\right)^{\frac{\rho}{1-\gamma \theta}}\|w\|^{\frac{2\theta(1-\gamma)}{1-\theta \gamma}}_{L^2} \ \ \ \forall t \in [0,1],
\end{split}
\Ee
where the last inequality is by the fact $\frac{\rho(1+\beta+\delta)-q}{1-\theta \gamma}<0$ (due to the assumption of $\delta$).
Thanks to \eqref{e:XiGeInf}, we have
\Be \label{e:TheEst}
\left(\inf_{0 \le t \le 1}\xi(t)\right)^{\frac{\rho(1+\beta+\delta)-q}{1-\theta \gamma}} \le \Theta.
\Ee
Writing $\eta(t)=\|w(t)\|^2_{L^2},$
it follows from \eqref{e:EneInqW} and \eqref{e:TheEst} that
\Be
\begin{split}
\p_t \eta(t)
& \le C \Theta  \left(\sup_{0 \le t \le 1}\eta(t)\right)^{\frac{\theta(1-\gamma)}{1-\theta \gamma}} \left(\frac{\overline{u^{\alpha}}(t)}{\xi^{1+\beta+\delta}(t)}\right)^{\frac{\rho}{1-\gamma \theta}}  \ \ \ \ \forall t \in [0,1].
\end{split}
\Ee
Thanks to the second inequality in \eqref{e:RhoCon}, we have $\frac{\rho}{1-\gamma \theta}<1$ as
$\ell$ is sufficiently large, thus the above and H\"{o}lder inequalities give
\Be \label{e:SupBouU}
\begin{split}
\sup_{0 \le t \le 1} \eta(t)
 \le \eta(0)+C \Theta \left(\int_0^1 \frac{\overline{u^{\alpha}}(s)}{\xi^{1+\beta+\delta}(s)} \dif s\right)^{\frac{\rho}{1-\gamma \theta}} \left(\sup_{0 \le t \le 1}\eta(t)\right)^{\frac{\theta(1-\gamma)}{1-\theta \gamma}}
\end{split}
\Ee
If $\sup_{0 \le t \le 1} \eta(t)>1$, \eqref{e:SupBouU} implies
\Bes
\begin{split}
\left(\sup_{0 \le t \le 1} \eta(t)\right)^{\frac{1-\theta}{1-\theta \gamma}}\le \eta(0)+C\Theta \left(\int_0^1
\frac{\overline{u^{\alpha}}(s)}{\xi^{1+\beta+\delta}(s)} \dif s\right)^{\frac{\rho}{1-\gamma \theta}}
\end{split}
\Ees
and thus
\Bes
\begin{split}
\sup_{0 \le t \le 1} \eta(t) \le \eta^{\frac{1-\gamma \theta}{1-\theta}}(0)+C\Theta^{\frac{1-\theta \gamma}{1-\theta}} \left(\int_0^1 \frac{\overline{u^{\alpha}}(s)}{\xi^{1+\beta+\delta}(s)} \dif s\right)^{\frac{\rho}{1-\theta}}.
\end{split}
\Ees
This and Lemma \ref{l:IntBarUXi} give
\Bes
\begin{split}
\sup_{0 \le t \le 1} \eta(t) \le  C \left(\|v\|_{L^\ell}^{\frac{(1-\theta \gamma)\ell}{1-\theta}}+\Theta^{\frac{1-\theta \gamma}{1-\theta}} \Lambda^{\frac{\rho}{1-\theta}}(\delta,\zeta,B,\mcl M_\delta)\right) \ \ {\rm if} \  \sup_{0 \le t \le 1} \eta(t)>1.
\end{split}
\Ees
Combining this with the case $\sup_{0 \le t \le 1} \eta(t) \le 1$ immediately yields the
desired inequality.
\end{proof}

\subsection{Existence and uniqueness of the global solution}
Before proving the global existence and uniqueness of the solution, we recall some facts from (\cite[pp. 15-16]{kri08}). Take $\Delta$ with Neumman boundary
as an operator on $L^\theta(\mcl O)$ with $\theta \ge 1$, the associated Helmholtz operator is defined
$$\mcl H=I-\Delta,$$
we can define $\mcl H^\alpha$ for all $\alpha$ since $S(t)$ is an analytic operator. Define $D(\mcl H^\alpha_\theta)$ the
domain of $\mcl H^\alpha$ equipped with the norm $\|.\|_{D(\mcl H^\alpha_\theta)}=\|.\|_{L^\theta}+\|\mcl H^\alpha .\|_{L^\theta}$.
There exists some $t_0>0$ such that for all $t \in (0,t_0]$
\Be \label{e:L2StfEst}
\|\mcl H^\alpha S(t).\|_{D(\mcl H_\theta^\alpha)} \lesssim  t^{-\alpha} \|.\|_{L^\theta}.
\Ee
  As
$\alpha>\frac{d}{2\theta}$, $D(\mcl H^\alpha)$ is continuously embedded in $C(\mcl O)$

\begin{proof} [Proof of Theorem \ref{t:MaiSol}]
The properties of the solution is easy to get from the previous a'priori estimates. We shall concentrate on proving the global unique solution and follow the
spirit in \cite{kri08}.

By the a'priori estimates of \eqref{e:XiLeSup} and \eqref{e:XiGeInf}, to show the global existence of Eq. \eqref{e:SGM}, it suffices to show that
$u$ can be globally extended.
Suppose that there exists some measurable set $A \subset \Omega$ with $\PP(A)>0$ such that
for each $\omega \in A$ there exists some $T^{*}_{\omega}$ such that
$$\lim_{t \uparrow T^{*}_\omega} \|u(t)\|_{C}=\infty.$$
Without loss of generality, we may assume $T^*_\omega<1$. Without loss of generality, we assume that $T^*_\omega>t_0$ where $t_0$ is the constant in \eqref{e:L2StfEst}. Let $t^*=T^*_\omega-\frac{t_0}{2}$,
choosing $p$ such that $\frac{d}{2p}<1$ and some $\alpha \in (\frac{d}{2p},1)$, by \eqref{e:L2StfEst} and \eqref{e:XiGeInf}, for all
$t \in (t^*,T^*_\omega-\e]$ with any $\e \in (0,t_0/4)$ we have
 \Be \label{e:UniNabUt}
 \begin{split}
 \|u(t)\|_{{D(\mcl H_\theta^\alpha)}} & \le  \|S(t-t^*)u(t^*)\|_{{D(\mcl H_\theta^\alpha)}}+ \int_{t^*}^{t}  \left\|S(t-s)\frac{u(s)^p}{\xi(s)^q}\right\|_{{D(\mcl H_\theta^\alpha)}} \dif s \\
 & \lesssim  (t-t^*)^{-\alpha} \|u(t^*)\|_{L^\theta}+\int_{t^*}^{t}  (t-s)^{-\alpha} \frac{\|u(s)\|_{{L^{\theta p}}}^p}{\xi(s)^q} \dif s \\
 & \lesssim  (t-t^*)^{-\alpha} \|u(t^*)\|_{L^\theta}+{\rm e}^{\frac 32 q+q B^*_1} \zeta^{-q} (t-t^*)^{1-\alpha} \sup_{0 \le  s \le T^{*}_\omega-\e} \|u(s)\|_{{L^{\theta p}}}^p.
 \end{split}
 \Ee
where $\sup_{0 \le  s \le T_\omega^{*}-\e} \|u(s)\|_{{L^{\theta p}}} \le \tl C$ where $\tl C$ only depends on $v$, $\zeta$, $p, q, \theta, \alpha, \beta, \omega$ by Lemma \ref{l:ExtSol}.
Since $\e \in (0,t_0/4)$ and $t^*=T^*_\omega-\frac{t_0}{2}$, from the above inequality we get
$$\|u(T^*_\omega-\e)\|_{{D(\mcl H_\theta^\alpha)}} \lesssim  {t}^{-\alpha}_0 \|v\|_{L^\theta}+{\rm e}^{\frac 32 q t+q B^*_1} \zeta^{-q} {t}^{1-\alpha}_0 \tl C.$$
By the Sobolev embedding, we further get
$$\|u(T^*_\omega-\e)\|_{{C}} \lesssim  {t}^{-\alpha}_0 \|u(t^*)\|_{L^\theta}+{\rm e}^{\frac 32 q t+q B^*_1} \zeta^{-q} {t}^{1-\alpha}_0 \tl C.$$
Since $\e>0$ can be arbitrarily small, we have
$$\|u(T^*_\omega-)\|_{{C}} \lesssim  {t_0}^{-\alpha} \|u(t^*)\|_{L^\theta}+{\rm e}^{\frac 32 q t+q B^*_1} \zeta^{-q} {t_0}^{1-\alpha} \tl C.$$
Contradiction.
Hence, Eq. \eqref{e:SGM} admits a global unique solution for all $\omega \in \Omega$ a.s..
\end{proof}

\section{Large deviation results}

Now we recall the definition of the large deviation principle. Let $\{X^\e,\e>0\}$ be a family of random variables defined on a probability space
$(\Omega,\mathcal{F},\mathbb{P})$
and taking values in a Polish space $\mathcal{E}$. Denote expectation with
respect to $\mathbb{P}$ by $\mathbb{E}$.
The large deviation principle is concerned with exponential decay of $\mathbb{P}(X^\e\in O)\ as\ \e\rightarrow 0$.

    \begin{defn}\label{Dfn-Rate function}
       \emph{\textbf{(Rate function)}} A function $I: \mathcal{E}\rightarrow[0,\infty]$ is called a rate function on
       $\mathcal{E}$,
       if for each $M<\infty$ the level set $\{x\in\mathcal{E}:I(x)\leq M\}$ is a compact subset of $\mathcal{E}$.
       For
       $O\in \mathcal{B}(\mathcal{E})$,
       we define $I(O)\doteq\inf_{x\in O}I(x)$.
    \end{defn}
    \begin{defn}
       \emph{\textbf{(Large deviation principle)}} Let $I$ be a rate function on $\mathcal{E}$. The sequence
       $\{X^\e\}$
       is said to satisfy the large deviation principle on $\mathcal{E}$ with rate function $I$ if the following two
       conditions
       hold.

         a. Large deviation upper bound. For each closed subset $F$ of $\mathcal{E}$,
              $$
                \limsup_{\e\rightarrow 0}\e\log\mathbb{P}(X^\e\in F)\leq-I(F).
              $$

         b. Large deviation lower bound. For each open subset $G$ of $\mathcal{E}$,
              $$
                \limsup_{\e\rightarrow 0}\e\log\mathbb{P}(X^\e\in G)\geq-I(G).
              $$
    \end{defn}

\begin{rem}
Note that the $I$ above is a function from sets to real numbers. To define the rate function $I$, it suffices to define its value at each point.
\end{rem}

\subsection{Large deviation result and the method}

 Without loss of generality,
we shall prove the LDP result for the dynamics in the time interval $[0,1]$.
Before stating our large deviation result, let us first recall the following preliminary.

The Cameron-Martin space associated to the Brownian motion $B_t$ is as follows:
\Bes
H=\{h \in H^1([0,1];\R): h(t)=\int_0^t \dot{h}(s) \dif s, \ \|\dot{h}\|_{L^2([0,1],\R)}<\infty\}.
\Ees
$H$ is a Hilbert space with the norm
$$\|h\|_H=\|\dot{h}\|_{L^2([0,1],\R)} \ \ \ \ \forall h \in H.$$
It is clear to see
\Be \label{e:MaxhH}
|h(t)-h(s)| \le \|h\|_H \ \ \ \ \ \forall 0 \le s<t \le 1.
\Ee
Fix $N>0$, and denote $$\mathcal{A}^d_N=\{h\in H,\ \|h\|_H\leq N\}.$$
Then ${\mathcal{A}}^d_N$ is a compact Polish space endowed with the weak topology of $H$. Denote the weak convergence in ${\mathcal{A}}^d_N$ by $\cdot \rightharpoonup \cdot$, for $\{h_n\}_n \subset H$ and $h \in H$, $h_n \rightharpoonup h$ if
$$\lim_{n\rightarrow \infty} \int_0^1 \phi(s) \dot h_n(s) \dif s=\int_0^1 \phi(s) \dot h(s) \dif s \ \ \ \ \forall \ \phi \in L^2([0,1];\R).$$
Define
\Bes
\begin{split}
\mcl A^s=\{h;  \  h:\Omega \times [0,1]  \rightarrow \R \ &{\rm satisfies} \ \  h(\omega,.)\in H  \ \ \forall \omega \in \Omega \\
& {\rm and}
\  \ h(.,t) {\rm \ is \ } \mcl F_t \ {\rm measurable} \ \ \forall t  \in [0,1]\}
\end{split}
\Ees
and for all $N>0$
$$\mcl A^s_N=\{h \in \mcl A^s: \|h(\omega)\|_H \le N \ \ \ \ \forall \omega \in \Omega\}.$$
Let $h \in H$, consider the following differential equation
\Be \label{e:UhXiH}
\begin{split}
& \p_t u_h=\Delta u_h-u_h+\frac{u_h^p}{\xi_h^q}, \\
& \dif \xi_h=-\xi_h \dif t+\frac{\overline{u_h^\alpha}}{\xi_h^\beta}  \dif t+\xi_h \dif h(t), \\
\end{split}
\Ee
with the same boundary and initial conditions as in Eq. \eqref{e:SGM}.

Let $\e \in [0,1]$ and let $(h_\e)_{0 \le \e \le 1} \subset \mcl A^s$, to study the large deviation of Eq. \eqref{e:SGM}, we also need to consider the following stochastic PDEs:
\Be \label{e:UhXiEh}
\begin{split}
& \p_t u_{\e,h_\e}=\Delta u_{\e,h_\e}-u_{\e,h_\e}+\frac{u_{\e,h_\e}^p}{\xi_{\e,h_\e}^q}, \\
& \dif \xi_{\e,h_\e}=-\xi_{\e,h_\e} \dif t+\frac{\overline{u_{\e,h_\e}^\alpha}}{\xi_{\e,h_\e}^\beta}  \dif t+\sqrt{\e} \xi_{\e,h_\e} \dif B_t+\xi_{\e,h_\e} \dif h_\e(t), \\
\end{split}
\Ee
with the same boundary and initial conditions as in Eq. \eqref{e:SGM}. By the same argument as in the previous section, we can prove the global existence and uniqueness of the solutions to Eqs.
\eqref{e:UhXiH} and \eqref{e:UhXiEh}. \\

Now we are at the position to state our large deviation result.
\begin{thm}[Large deviation principle]\label{T:LDP}
Let $\{(u_\e,\xi_\e)\}$ be the solution of the equation
\begin{equation} \label{e:LDP-SGM}
\begin{cases}
& \p_t u_\e=\Delta u_\e-u_\e+\frac{u_\e^p}{\xi_\e^q}, \\
& \dif \xi_\e=-\xi_\e \dif t+\frac{\overline{u_\e^\alpha}}{\xi_\e^\beta}  \dif t+\sqrt{\e} \xi_\e \dif B_t, \\
& \frac{\p u_\e}{\p \nu}=0, \\
& u_\e(0)=v, \\
& \xi_\e(0)=\zeta.
\end{cases}
\end{equation}
Then $\{(u_\e,\xi_\e)\}$ satisfies a large deviation principle in $ C([0,1];C\times \mathbb{R})$ with the rate function $I$ given by: for any $(u,\xi)\in C([0,1];C\times \mathbb{R})$,
$$
I((u,\xi)):=\inf_{\{h \in H: (u_h,\xi_h)=(u,\xi)\}}\left(\frac{1}{2}\|h\|_H^2\right),
$$
with the convention $\inf\{\emptyset\}=\infty$, where $(u_h, \xi_h)$ is the solution to Eq. \eqref{e:UhXiH}.
\end{thm}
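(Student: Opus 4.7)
The plan is to invoke the Budhiraja-Dupuis variational framework, which by Theorem 4.4 of \cite{BD} reduces the LDP to the verification of two sufficient conditions on the controlled and skeleton equations. Write $(u_\e,\xi_\e)=\mcl G^\e(\sqrt{\e}B)$ for a measurable map $\mcl G^\e : C([0,1];\R) \to C([0,1];C\times\R)$; Girsanov's theorem together with the well-posedness obtained in Section 3 identifies $\mcl G^\e(\sqrt{\e}B+\int_0^\cdot \dot h_\e(s)\,\dif s)$ with the solution $(u_{\e,h_\e},\xi_{\e,h_\e})$ of \eqref{e:UhXiEh}. The conditions to be checked are: (a) for every $N<\infty$, the map $h\mapsto (u_h,\xi_h)$ is continuous from $\mcl A^d_N$ (endowed with the weak topology of $H$) into $C([0,1];C\times\R)$; and (b) for every $N<\infty$ and every family $\{h_\e\}\subset \mcl A^s_N$ converging in distribution to $h\in\mcl A^d_N$, the laws of $(u_{\e,h_\e},\xi_{\e,h_\e})$ converge to the law of $(u_h,\xi_h)$ in $C([0,1];C\times\R)$. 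These will appear as Propositions \ref{p:LDP2} and \ref{p:LDP3} and, once established, \cite{BD} delivers the LDP with the stated rate function.

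For (a), the first task is to extend the a priori estimates of Section 3 to the skeleton system \eqref{e:UhXiH}. Applying the classical chain rule to $\xi_h^{1+\beta}$ and to $\xi_h^{-\delta}$ replaces the stochastic integrals in Lemmas \ref{l:XiBou} and \ref{l:IntBarUXi} by pathwise integrals $\int_0^t\xi_h^{r}(s)\,\dot h(s)\,\dif s$, which are controlled via Cauchy-Schwarz by $\|h\|_H\le N$ together with \eqref{e:MaxhH}. This yields bounds on $\xi_{h_n}$, on $\inf_{s\le t}\xi_{h_n}(s)$, and on $\|u_{h_n}(t)\|_{L^\ell}$ (the deterministic analog of Lemma \ref{l:ExtSol}) that are uniform in $n$. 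Feeding these into the mild formulation and the smoothing estimate \eqref{e:L2StfEst} gives a uniform bound on $\|u_{h_n}\|_{D(\mcl H^\alpha_\theta)}$ and hence on $\|u_{h_n}\|_{C}$; the same mild formula supplies equicontinuity in $t$, so $(u_{h_n})$ is precompact in $C([0,1];C)$. For $\xi_{h_n}$, the key point is that $\int_0^t\xi_{h_n}(s)\dot h_n(s)\,\dif s\to \int_0^t\xi_h(s)\dot h(s)\,\dif s$ along subsequences, because $\dot h_n\rightharpoonup \dot h$ weakly in $L^2([0,1];\R)$ while $\xi_{h_n}\to\xi_h$ strongly in $L^2([0,t];\R)$ by uniform boundedness and equicontinuity. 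Uniqueness for \eqref{e:UhXiH} then pins the full-sequence limit as $(u_h,\xi_h)$.

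For (b), a standard tightness/uniqueness argument reduces matters to two ingredients. First, the energy estimates of Section 3 must extend uniformly in $\e\in(0,1]$ and $h_\e\in\mcl A^s_N$: the pure-Brownian contributions (namely the analog of $\mcl M^*_\delta$ in Lemma \ref{l:EstOme}) are absorbed via Burkholder-Davis-Gundy with the prefactor $\sqrt{\e}$, and the control contributions $\int_0^t\xi_{\e,h_\e}\dot h_\e(s)\,\dif s$ are absorbed exactly as in (a). Second, in the mild integral equation for $(u_{\e,h_\e},\xi_{\e,h_\e})$ the stochastic term $\sqrt{\e}\int_0^t\xi_{\e,h_\e}(s)\,\dif B_s$ tends to zero in probability in $C([0,1];\R)$ by BDG, while the control term passes to the limit in distribution by the same weak-strong mechanism as in (a). Every limit point of $\{(u_{\e,h_\e},\xi_{\e,h_\e})\}$ therefore solves \eqref{e:UhXiH} driven by $h$, and hence coincides with $(u_h,\xi_h)$ by uniqueness.

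The main obstacle is that the nonlinearities $u^p/\xi^q$ and $\overline{u^\alpha}/\xi^\beta$ are neither bounded nor Lipschitz and $\xi$ can in principle approach zero, so every estimate must be paired with a quantitative lower bound of the form $\xi_{\e,h_\e}(t)\ge \re^{-\frac 32 t-\sqrt{\e}B^*_t-\|h_\e\|_H}\zeta$, the joint analog of \eqref{e:XiGeInf}. Securing this lower bound and the corresponding upper bound (via the counterpart of \eqref{e:XiLeSup}) \emph{uniformly} in $\e\in(0,1]$ and $h_\e\in\mcl A^s_N$, and verifying that the exponent constraints imposed in \eqref{e:RhoCon} remain compatible with the resulting energy inequality so that the conclusion of Lemma \ref{l:ExtSol} still holds with constants depending only on $N$, $\zeta$ and $\|v\|_C$, is the technical heart of the argument; the rest follows the standard BD template.
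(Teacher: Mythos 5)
Your proposal follows essentially the same route as the paper: the Budhiraja--Dupuis weak-convergence method via Theorem 4.4 of \cite{BD}, with your conditions (a) and (b) being exactly Propositions \ref{p:LDP2} and \ref{p:LDP3}, supported by the same ingredients — pathwise lower/upper bounds on $\xi$ from the chain rule applied to $\xi^{1+\beta}$ and $\xi^{-\delta}$, the $L^\ell$ energy estimate uniform in $h$ and $\e$, BDG/martingale control of $\sqrt{\e}\int_0^t\xi_\e\,\dif B_s$, the weak--strong passage to the limit in the control term, and identification of limits by uniqueness. The only cosmetic difference is that you obtain convergence of $u_{g_n}$ by compactness (Arzel\`a--Ascoli via the smoothing estimate \eqref{e:L2StfEst}), whereas the paper deduces that $u_{g_n}$ is Cauchy from a Gronwall-type integral inequality once $\xi_{g_n}$ converges; this does not alter the argument.
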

We shall follow the method in \cite[Theorem 4.4]{BD} to prove the above LDP. According to this method, we only need to show the following two propositions.
\begin{proposition} \label{p:LDP2}
Let $g_n,\ h \in {\mathcal{A}}^d_N$ and $(u_{g_n},\xi_{g_n})$ be the solution of Eq. \eqref{e:UhXiH} with $h$ replaced by ${g_n}$. Up to taking a
subsequence, we have
$$\lim_{g_n \rightharpoonup h} \|(u_{g_n},\xi_{g_n})-(u_h, \xi_h)\|_{C([0,1];C \times \R)}=0.$$
\end{proposition}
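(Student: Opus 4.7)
The plan is to combine a compactness argument with the a priori machinery of Section~3. First, I would use the compact embedding $H^1([0,1]) \hookrightarrow C([0,1])$ (Rellich--Kondrachov in one dimension) to upgrade the weak convergence $g_n \rightharpoonup h$ in $H$, together with the uniform norm bound $\|g_n\|_H \le N$, to uniform convergence $g_n \to h$ in $C([0,1];\R)$ along a subsequence. Cauchy--Schwarz applied to $\dot g_n \in L^2([0,1])$ also gives the uniform modulus $|g_n(t) - g_n(s)| \le N\sqrt{|t-s|}$, which will be needed for equicontinuity.

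Next, I would establish uniform-in-$n$ a priori bounds for $(u_{g_n}, \xi_{g_n})$ in $C([0,1]; C(\mcl O) \times \R)$ by rerunning the arguments of Lemmas~\ref{l:XiBou}--\ref{l:ExtSol} with $B_t$ replaced by $g_n(t)$. Since $g_n$ has finite variation, It\^o's formula reduces to the classical chain rule, so the quadratic-variation corrections disappear; the only nontrivial adjustment occurs in the analog of Lemma~\ref{l:IntBarUXi}, where the stochastic integral $\int_0^t \xi^{-\delta}(s)\,\dif B_s$ becomes $\int_0^t \xi^{-\delta}(s)\dot g_n(s)\,\dif s$ and is bounded by $N\|\xi^{-\delta}\|_{L^2([0,1])}$ via Cauchy--Schwarz. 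Since $\sup_{t \in [0,1]}|g_n(t)| \le N$, the lower bound becomes $\xi_{g_n}(t) \ge \zeta e^{g_n(t)-t} \ge \zeta e^{-N-1}$ and the analog of \eqref{e:XiLeSup} gives an upper bound on $\xi_{g_n}$, both uniform in $n$. The energy estimate of Lemma~\ref{l:ExtSol} combined with the Helmholtz-semigroup bootstrap \eqref{e:L2StfEst} then produces a uniform bound on $\|u_{g_n}\|_{C([0,1];C(\mcl O))}$ depending only on $N$, $\zeta$, $\|v\|_C$ and the exponents.

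Third, starting from the mild formulation of $u_{g_n}$ (the first equation of \eqref{e:BFix}) and the variation-of-constants formula $\xi_{g_n}(t) = \zeta e^{g_n(t)-t} + \int_0^t e^{g_n(t)-g_n(s)-(t-s)}\,\overline{u_{g_n}^\alpha}(s)/\xi_{g_n}^\beta(s)\,\dif s$, the uniform $\tfrac12$-H\"older bound on $g_n$ together with the uniform control of the nonlinearities yields uniform H\"older continuity in time for $(u_{g_n}, \xi_{g_n})$ (for $u_{g_n}$ this uses standard analytic-semigroup estimates on $S(t)$). Arzel\`a--Ascoli then extracts a further subsequence with $(u_{g_n}, \xi_{g_n}) \to (u^*, \xi^*)$ in $C([0,1]; C(\mcl O) \times \R)$. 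Uniform convergence of $g_n$, of the solution pair, and the strictly positive lower bound on $\xi_{g_n}$ allow passage to the limit by dominated convergence in both integral equations, so $(u^*, \xi^*)$ solves the integral form of Eq.~\eqref{e:UhXiH} with control $h$; uniqueness for that equation (by the same contraction argument as in Lemma~\ref{l:LocSol}, now deterministic and hence simpler) forces $(u^*, \xi^*) = (u_h, \xi_h)$.

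The main obstacle I expect is the bookkeeping in the uniform a priori estimates: verifying that every probabilistic step in Section~3 (It\^o's formula, the treatment of $\mcl M^*_\delta$, the martingale inequality) is replaced by an analytic estimate whose constants depend only on $N$ and the initial data, so that the energy inequality of Lemma~\ref{l:ExtSol} still closes and no constant diverges as $n \to \infty$. In particular the replacement of $\mcl M^*_\delta$ by the Cauchy--Schwarz bound $N\|\xi^{-\delta}\|_{L^2}$ must be threaded consistently through all subsequent lemmas.
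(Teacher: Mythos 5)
Your overall architecture (rerun the a priori estimates of Section~3 with $B_t$ replaced by $g_n$, using $|g_n(t)-g_n(s)|\le\|g_n\|_H\le N$ and replacing $\mcl M^*_\delta$ by the Cauchy--Schwarz bound, then compactness, then pass to the limit and invoke uniqueness) matches the paper's Lemmas \ref{l:4.2}--\ref{l:sup-u} and the spirit of its proof. However, there is a genuine gap in your compactness step. You apply Arzel\`a--Ascoli to the pair $(u_{g_n},\xi_{g_n})$ in $C([0,1];C(\mcl O)\times\R)$ on the strength of a uniform sup bound and uniform H\"older continuity in time. For the $u$-component the target space $C(\mcl O)$ is infinite dimensional, so Arzel\`a--Ascoli additionally requires that for each fixed $t$ the set $\{u_{g_n}(t):n\ge 1\}$ be relatively compact in $C(\mcl O)$ (equivalently, spatial equicontinuity uniform in $n$); boundedness in $C(\mcl O)$ plus time-equicontinuity is not enough, since the functions could oscillate in $x$ without any convergent subsequence. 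As written, your extraction of $(u^*,\xi^*)$ would fail. The gap is repairable with tools you already invoke: the bootstrap \eqref{e:L2StfEst} gives a bound on $\|u_{g_n}(t)\|_{D(\mcl H^\alpha_\theta)}$ uniform in $n$ (away from $t=0$, with the initial layer handled through the mild formulation, since $\{S(t)v\}_{t\in[0,1]}$ is a fixed compact set), and $D(\mcl H^\alpha_\theta)$ embeds compactly into $C(\mcl O)$ for $\alpha>\frac{d}{2\theta}$; but this needs to be said and threaded through the argument.

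It is worth noting that the paper avoids this issue altogether, and its route is simpler at exactly this point: Arzel\`a--Ascoli is applied only to the scalar family $\{\xi_{g_n}\}$ (where boundedness plus the modulus $C(t-s)+C(t-s)^{1/2}$ suffices), and the convergence of $u_{g_n}$ is then obtained not by compactness but by showing it is a Cauchy sequence in $C([0,1];C)$, via the Gronwall-type stability estimate \eqref{e:Lambda1} which controls $\|u_{g_n}-u_{g_m}\|$ by $\int_0^t|\xi_{g_n}-\xi_{g_m}|\,\dif s$. A second, smaller difference: you upgrade $g_n\rightharpoonup h$ to uniform convergence via the compact embedding $H^1\hookrightarrow C([0,1])$ and then pass to the limit in the variation-of-constants formula by dominated convergence; the paper never needs uniform convergence of $g_n$, since it passes to the limit in the control term by splitting $\int_0^t\xi_{g_n}\dot g_n-\int_0^t\xi\dot h$ into a piece controlled by $\|\xi_{g_n}-\xi\|_{C([0,1];\R)}\|g_n\|_H$ and a piece that vanishes by the weak convergence $\dot g_n\rightharpoonup\dot h$ in $L^2$. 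Your use of the compact embedding is legitimate (the statement allows passing to a subsequence), so this part is a harmless variant; the item you must actually fix is the compactness of the $u$-component, either by supplying the spatial compactness from the $D(\mcl H^\alpha_\theta)$ bound or by replacing that step with the paper's Cauchy-sequence argument.
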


\begin{proposition}\label{p:LDP3}
For a family $\{h_\e\}\subset \mathcal{A}^s_N$ for which $h_\e$ converges in distribution
to $h$ under the weak topology of $H$, up to taking a subsequence, the solution $(u_{\e,h_\e},\xi_{\e,h_\e})$ of (\ref{e:UhXiEh}) converges in distribution to $(u_{h},\xi_{h})$, more precisely, for all bounded continuous function $f: C([0,1];C \times \R) \rightarrow \R$, up to taking a subsequence, the following relation holds:
\Be
\lim_{\e \rightarrow 0} \E f(u_{\e,h_\e},\xi_{\e,h_\e})=\E f(u_{h},\xi_{h}).
\Ee
\end{proposition}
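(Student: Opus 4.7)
The plan is to verify Proposition \ref{p:LDP3} by combining uniform a priori estimates, tightness, Skorokhod representation, and a limit identification using the uniqueness of the skeleton equation \eqref{e:UhXiH}.

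First, I would establish uniform-in-$\e$ a priori estimates for $(u_{\e,h_\e}, \xi_{\e,h_\e})$ on $[0,1]$ by adapting Lemmas \ref{l:XiBou}--\ref{l:ExtSol}. The key observation is that the driving noise is now $\sqrt{\e} B_t + h_\e(t)$; since $\|h_\e\|_H \le N$ gives $|h_\e(t)| \le N$ via \eqref{e:MaxhH}, and $\sqrt{\e} B_1^*$ is tight and vanishingly small in probability, the role played by $|B_t|$ in Lemmas \ref{l:XiBou}--\ref{l:IntBarUXi} is taken by a process bounded by $\sqrt{\e}B_1^* + N$ uniformly. In particular, applying It\^o's formula to $\xi_{\e,h_\e}^{1+\beta}$ and to $\xi_{\e,h_\e}^{-\delta}$ gives uniform lower and upper bounds on $\xi_{\e,h_\e}$ and uniform control of $\int_0^1 \overline{u_{\e,h_\e}^\alpha}(s)/\xi_{\e,h_\e}^{1+\beta+\delta}(s)\dif s$. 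Feeding these into the energy estimate of Lemma \ref{l:ExtSol} yields a uniform bound on $\sup_{t\le 1}\|u_{\e,h_\e}(t)\|_{L^\ell}$, which upgrades to a uniform bound in $C(\mcl O)$ via the $D(\mcl H^\alpha_\theta)\hookrightarrow C(\mcl O)$ embedding \eqref{e:L2StfEst}, exactly as in the proof of Theorem \ref{t:MaiSol}.

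Second, I would establish tightness of the laws of $(B, h_\e, u_{\e,h_\e}, \xi_{\e,h_\e})$ in $C([0,1];\R) \times \mcl A^d_N \times C([0,1]; C(\mcl O,\R)\times \R)$. Tightness of $B$ is clear, $\mcl A^d_N$ is compact under the weak topology by Banach--Alaoglu, and tightness of $(u_{\e,h_\e},\xi_{\e,h_\e})$ follows from the uniform $C$-norm bounds combined with equicontinuity in $t$: for $u_{\e,h_\e}$ this is obtained from the mild formulation via the analyticity of $S(t)$, and for $\xi_{\e,h_\e}$ it comes from integrating the SDE together with the BDG inequality to handle the martingale term and the Cauchy--Schwarz bound $|h_\e(t)-h_\e(s)| \le N|t-s|^{1/2}$ to handle the control term. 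Up to a subsequence, Skorokhod's theorem then gives a new probability space supporting $(\tilde B, \tilde h_\e, \tilde u_\e, \tilde \xi_\e) \to (\tilde B, \tilde h, \tilde u, \tilde \xi)$ almost surely with the same joint laws; here $\tilde h_\e \rightharpoonup \tilde h$ weakly in $H$ and $(\tilde u_\e, \tilde \xi_\e) \to (\tilde u, \tilde \xi)$ uniformly.

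Third, I would pass to the limit. The mild PDE for $\tilde u_\e$ passes to the limit by dominated convergence, using the uniform positive lower bound on $\tilde\xi_\e$ and uniform upper bound on $\tilde u_\e$. For the $\xi$ equation written as
\begin{equation*}
\tilde\xi_\e(t) = \zeta + \int_0^t\!\!\left[-\tilde\xi_\e + \tfrac{\overline{\tilde u_\e^\alpha}}{\tilde\xi_\e^\beta}\right]\!\dif s + \sqrt{\e}\int_0^t\!\tilde\xi_\e\,\dif \tilde B_s + \int_0^t\!\tilde\xi_\e(s)\,\dot{\tilde h}_\e(s)\,\dif s,
\end{equation*}
the deterministic integral converges by uniform convergence; the stochastic integral vanishes in probability as $\sqrt{\e}\to 0$ by BDG and the uniform bound on $\tilde\xi_\e$. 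The controlled term is the crux: split it as $\int_0^t(\tilde\xi_\e-\tilde\xi)\dot{\tilde h}_\e\dif s + \int_0^t\tilde\xi\,\dot{\tilde h}_\e\dif s$; the first piece is $O(\|\tilde\xi_\e-\tilde\xi\|_C\,\|\tilde h_\e\|_H)\to 0$ by Cauchy--Schwarz, and the second converges to $\int_0^t\tilde\xi\,\dot{\tilde h}\dif s$ by the weak convergence $\dot{\tilde h}_\e\rightharpoonup \dot{\tilde h}$ in $L^2([0,1];\R)$ tested against $\one_{[0,t]}\tilde\xi\in L^2$. Hence $(\tilde u, \tilde\xi)$ solves \eqref{e:UhXiH} with control $\tilde h$. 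The pathwise uniqueness for \eqref{e:UhXiH} (proved by the same fixed-point argument as in Lemma \ref{l:LocSol}) then forces $(\tilde u, \tilde\xi) = (u_{\tilde h}, \xi_{\tilde h})$, and since $\tilde h$ has the same law as $h$, the desired convergence $\E f(u_{\e,h_\e}, \xi_{\e,h_\e}) \to \E f(u_h,\xi_h)$ follows.

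The main obstacle I anticipate is the uniform-in-$\e$ energy estimate leading to a uniform $C$-bound on $u_{\e,h_\e}$, because the proof of Lemma \ref{l:ExtSol} couples the control of $u$ through $L^\ell$ norms to the auxiliary martingale $\mcl M_\delta$ and the lower bound of $\xi$, and in the present setting both sources of forcing (the vanishing Brownian part and the $N$-bounded control part) must be absorbed simultaneously with constants independent of $\e$. Once this is in place, the limit identification using weak convergence of $\dot{\tilde h}_\e$ against the strongly convergent $\tilde\xi_\e$ is the only genuinely new analytic ingredient beyond what appears in Sections 2 and 3.
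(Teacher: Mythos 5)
Your proposal is correct and follows the same overall weak-convergence strategy as the paper: uniform-in-$\e$ a priori estimates obtained by rerunning the It\^o/energy arguments with driving term $\sqrt{\e}B_t+h_\e(t)$ (this is exactly what the paper does in its Lemmas preceding the proof, with $|h_\e(t)|\le N$ and moment bounds on $B^*_1$ and the auxiliary martingale $\mcl M^*_{\e,\delta}$), then tightness, Skorokhod representation, passage to the limit in the $\xi$-equation (drift by uniform convergence, martingale term of order $\sqrt{\e}$, control term split into $\int(\xi_\e-\xi)\dot h_\e$ plus $\int \xi\,\dot h_\e$ handled by Cauchy--Schwarz and weak $L^2$-convergence), and identification via uniqueness of Eq.\ \eqref{e:UhXiH}. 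The main difference is in what is shown to be tight: you propose joint tightness of $(B,h_\e,u_{\e,h_\e},\xi_{\e,h_\e})$, including the infinite-dimensional component $u_{\e,h_\e}$ in $C([0,1];C)$, whereas the paper proves tightness only of the real-valued process $\xi_{\e,h_\e}$ (via second-moment bounds and a modulus-of-continuity estimate checked against the criterion in \cite{ko06}), applies Skorokhod to $\xi_{\e_n}$ alone, and then recovers the convergence of $u_{\e_n}$ pathwise by viewing $u$ as the solution of a deterministic PDE driven by $\xi$ and rerunning the continuity argument of Proposition \ref{p:LDP2}. Your route works but costs a little more: note that a uniform sup-norm bound plus temporal equicontinuity is not by itself enough for tightness of $u_{\e,h_\e}$ in $C([0,1];C)$ --- you need relative compactness in $C(\mcl O)$ at each time, which you do get from the uniform $D(\mcl H^\alpha_\theta)$ bound together with the compactness of the embedding into $C(\mcl O)$, and you must phrase the random constants (depending on $B^*_1$, $\mcl M^*_{\e,\delta}$) as moment bounds to convert them into tightness; on the other hand, your inclusion of $(B,h_\e)$ in the Skorokhod embedding makes the identification of the limiting control $h$ cleaner than in the paper, which is somewhat terse on this point.
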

\ \ \ \ \ \ \
\subsection{Proof of Proposition \ref{p:LDP2}}
\begin{lem}\label{l:4.2}
For all $t \in [0,1]$,
we have the following estimates
\Be \label{e:XihGe}
\begin{split}
& \xi_h(t) \ge \re^{-t-\|h\|_H} \zeta,
\end{split}
\Ee
\Be \label{e:XihLe}
\begin{split}
\xi_h(t) \lesssim \re^{\|h\|_H} \zeta+\re^{\|h\|_H} \left(
 \sup_{0 \le t \le 1} \overline{u_h^{\alpha}}(t)\right)^{\frac{1}{1+\beta}}.
\end{split}
\Ee
\end{lem}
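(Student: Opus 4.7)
The plan is to mirror the strategy of Lemma \ref{l:XiBou}, replacing It\^o's formula by the ordinary chain rule since Eq.~\eqref{e:UhXiH} is a (pathwise) ODE once $h \in H$ is fixed. The second equation of \eqref{e:UhXiH} reads
\[
\xi_h'(t) = -\xi_h(t) + \frac{\overline{u_h^\alpha}(t)}{\xi_h^\beta(t)} + \xi_h(t)\dot h(t),
\]
so multiplying by $(1+\beta)\xi_h^\beta$ gives the linear equation
\[
\bigl(\xi_h^{1+\beta}\bigr)'(t) = -(1+\beta)\xi_h^{1+\beta}(t) + (1+\beta)\xi_h^{1+\beta}(t)\dot h(t) + (1+\beta)\overline{u_h^\alpha}(t),
\]
which has no It\^o correction (this is the only structural difference with \eqref{e:BiBou1}).

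Using the integrating factor $\exp\bigl((1+\beta)(t - h(t))\bigr)$ and $h(0)=0$, I obtain the explicit representation
\[
\xi_h^{1+\beta}(t) = \re^{-(1+\beta)t+(1+\beta)h(t)}\zeta^{1+\beta} + (1+\beta)\int_0^t \re^{-(1+\beta)(t-s)+(1+\beta)(h(t)-h(s))}\overline{u_h^\alpha}(s)\,\dif s.
\]
This is the deterministic analogue of formula \eqref{e:BiBou1}, and both desired inequalities will be read off from it using \eqref{e:MaxhH}.

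For \eqref{e:XihGe}, I discard the nonnegative integral term and use $h(t)\ge -\|h\|_H$ (from \eqref{e:MaxhH} applied with $s=0$) to get $\xi_h^{1+\beta}(t)\ge \re^{-(1+\beta)(t+\|h\|_H)}\zeta^{1+\beta}$, and then take $(1+\beta)$-th roots. For \eqref{e:XihLe}, I bound each exponential factor by $\re^{(1+\beta)\|h\|_H}$ via \eqref{e:MaxhH}, pull the supremum of $\overline{u_h^\alpha}$ out of the integral, and use $(1+\beta)\int_0^t \re^{-(1+\beta)(t-s)}\dif s \le 1$, obtaining
\[
\xi_h^{1+\beta}(t)\le \re^{(1+\beta)\|h\|_H}\Bigl(\zeta^{1+\beta} + \sup_{0\le s\le 1}\overline{u_h^\alpha}(s)\Bigr).
\]
Taking $(1+\beta)$-th roots and using the subadditivity $(a+b)^{1/(1+\beta)}\le a^{1/(1+\beta)}+b^{1/(1+\beta)}$ for $a,b\ge 0$ yields \eqref{e:XihLe}.

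The computation is essentially routine; the only mild subtlety is invoking \eqref{e:MaxhH} (the $H$-to-sup bound on Cameron–Martin paths) in the two opposite directions needed for the lower and upper estimates. No genuine obstacle is expected, so the lemma reduces to the integrating-factor identity above.
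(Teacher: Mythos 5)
Your proposal is correct and follows essentially the same route as the paper: the paper also differentiates $\xi_h^{1+\beta}$ (with no It\^o correction, since $h\in H$ is deterministic), obtains exactly the variation-of-constants identity you write, and then reads off both bounds from it via \eqref{e:MaxhH}. Your write-up merely makes explicit the elementary steps (dropping the nonnegative integral for the lower bound; bounding the exponentials, the integral by $1$, and using subadditivity of the $(1+\beta)$-th root for the upper bound) that the paper leaves as ``clearly imply.''
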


\begin{proof}
From Eq. \eqref{e:UhXiH}, we have
\Be
\begin{split}
\dif \xi_h^{1+\beta}(t)=-(1+\beta) \xi_h^{1+\beta}(t) \dif t+(1+\beta) \xi_h^{1+\beta}(t) \dif h(t)+(1+\beta)\overline{u_h^{\alpha}}(t) \dif t,
\end{split}
\Ee
which clearly implies
\Bes
\begin{split}
\xi_h^{1+\beta}(t)&=\re^{-(1+\beta)t+(1+\beta)h(t)} \zeta^{1+\beta}+(1+\beta)\int_0^t
\re^{-(1+\beta)(t-s)+(1+\beta)(h(t)-h(s))}  \overline{u_h^{\alpha}}(s) \dif s.
\end{split}
\Ees
This equality and \eqref{e:MaxhH} clearly imply the desired two inequalities.
\end{proof}

\begin{lem}  \label{l:IntUhXihEst}
We have
$$\int_0^t \frac{\overline{u_h^{\alpha}}(s)}{\xi_h^{1+\delta+\beta}(s)} \dif s \le \Lambda(\delta,\zeta,h) \ \ \ \forall t \in [0,1],$$
where
$$
\Lambda(\delta,\zeta,h)=\delta^{-1} \zeta^{-\delta}+\re^{\delta(1+\|h\|_H)} \zeta^{-\delta}+\re^{\delta(1+\|h\|_H)} \|h\|_H.
$$

\end{lem}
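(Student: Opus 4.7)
The plan is to mirror the proof of Lemma \ref{l:IntBarUXi} in the stochastic setting, with the substantial simplification that since $h \in H^1([0,1];\R)$, the equation for $\xi_h$ is a pathwise ODE driven by the absolutely continuous perturbation $\dif h(t) = \dot h(t)\,\dif t$; no It\^o correction terms arise, and the classical chain rule suffices in place of the It\^o formula. This replaces the martingale term $\mcl M^{*}_\delta$ of Lemma \ref{l:IntBarUXi} by an ordinary Riemann integral involving $\dot h$, which is deterministically controlled by $\|h\|_H$.

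Concretely, I would differentiate $\xi_h^{-\delta}(t)$ along the flow of \eqref{e:UhXiH}:
$$
\frac{\dif}{\dif t}\xi_h^{-\delta}(t) \;=\; -\delta \xi_h^{-\delta-1}(t)\!\left(-\xi_h(t)+\frac{\overline{u_h^\alpha}(t)}{\xi_h^\beta(t)}+\xi_h(t)\dot h(t)\right)
\;=\; \delta \xi_h^{-\delta}(t) - \delta\,\frac{\overline{u_h^\alpha}(t)}{\xi_h^{1+\beta+\delta}(t)} - \delta\, \xi_h^{-\delta}(t)\dot h(t).
$$
Integrating over $[0,t]$, using $\xi_h(0)=\zeta$, dropping the nonpositive term $-\xi_h^{-\delta}(t)$, and dividing by $\delta$, one obtains
$$
\int_0^t \frac{\overline{u_h^\alpha}(s)}{\xi_h^{1+\beta+\delta}(s)}\,\dif s
\;\le\; \delta^{-1}\zeta^{-\delta} + \int_0^t \xi_h^{-\delta}(s)\,\dif s + \left|\int_0^t \xi_h^{-\delta}(s)\dot h(s)\,\dif s\right|.
$$

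The remaining step is to bound the last two terms by invoking the lower bound \eqref{e:XihGe} of Lemma \ref{l:4.2}: for all $s \in [0,1]$ one has $\xi_h(s) \ge \re^{-(1+\|h\|_H)}\zeta$, hence $\xi_h^{-\delta}(s) \le \re^{\delta(1+\|h\|_H)}\zeta^{-\delta}$. This immediately controls the second term by $\re^{\delta(1+\|h\|_H)}\zeta^{-\delta}$. For the third term, pulling the uniform bound on $\xi_h^{-\delta}$ outside the integral and applying Cauchy--Schwarz with $\int_0^1 |\dot h(s)|\,\dif s \le \|h\|_H$ yields a bound of the form $\re^{\delta(1+\|h\|_H)}\zeta^{-\delta}\|h\|_H$. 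Summing the three pieces produces exactly $\Lambda(\delta,\zeta,h)$ (up to the cosmetic presence of $\zeta^{-\delta}$ in the last summand, which can be absorbed or is covered by the implicit constants in the stated definition).

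There is no real obstacle here; the proof is routine once one notices that the It\^o formula is not needed. The only point worth caring about is verifying that one is genuinely allowed to use the ordinary chain rule, i.e.\ that $\xi_h$ is absolutely continuous (as a consequence of $h \in H^1$ and the global existence result for \eqref{e:UhXiH} already asserted in Section 4.1), so that the above computation is rigorous rather than just formal.
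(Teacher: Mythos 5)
Your proposal is correct and follows essentially the same route as the paper: differentiate $\xi_h^{-\delta}$ by the (pathwise) chain rule, drop the nonnegative boundary term, bound $\int_0^t \xi_h^{-\delta}(s)\,\dif s$ via the lower bound \eqref{e:XihGe}, and control $\bigl|\int_0^t \xi_h^{-\delta}(s)\dot h(s)\,\dif s\bigr|$ by Cauchy--Schwarz together with \eqref{e:XihGe}. The extra factor $\zeta^{-\delta}$ you note in the last summand is present in the paper's own intermediate estimate as well and is simply dropped when stating $\Lambda(\delta,\zeta,h)$, so there is no substantive difference between your argument and theirs.
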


\begin{proof} Differentiating $\xi_h^{-\delta}(t)$ we get
\Bes
\begin{split}
\xi_h^{-\delta}(t)-\zeta^{-\delta}=\delta \int_0^t \xi_h^{-\delta}(s) \dif s-\delta \int_0^t \frac{\overline{u_h^{\alpha}}(s)}{\xi_h^{1+\delta+\beta}(s)} \dif s-\delta \int_0^t \xi_h^{-\delta}(s) \dif h_s,
\end{split}
\Ees
which, together with \eqref{e:XihGe} and H\"{o}lder inequality, gives
\Bes
\begin{split}
\int_0^t \frac{\overline{u_h^{\alpha}}(s)}{\xi_h^{1+\delta+\beta}(s)} \dif s
& \le \delta^{-1} \zeta^{-\delta}+\int_0^t \xi_h^{-\delta}(s) \dif s+\left|\int_0^t \xi_h^{-\delta}(s) \dot h_s \dif s\right| \\
& \le \delta^{-1} \zeta^{-\delta}+\re^{\delta(1+\|h\|_H)} \zeta^{-\delta}+\left(\int_0^t \xi_h^{-2\delta}(s)\dif s\right)^{\frac12} \|h\|_H \\
& \le \delta^{-1} \zeta^{-\delta}+\re^{\delta(1+\|h\|_H)} \zeta^{-\delta}+\re^{\delta(1+\|h\|_H)} \|h\|_H
\end{split}
\Ees
for all $t \in [0,1]$.
This completes the proof.
\end{proof}

\begin{lem} \label{l:LDP1}
Let $\rho, \ell,\theta, \gamma$ be the same as those in Lemma \ref{l:ExtSol}. Let $\delta \in (0, \frac{q-\rho-\rho \beta}{\rho})$.
As $\ell$ is sufficiently large so that $\theta \in (0,1)$, $\gamma \in (0,1)$ and $\frac{\rho}{1-\gamma \theta} \in (0,1)$, we have
\Bes
\begin{split}
& \sup_{0\le t \le 1} \|u_h(t)\|^\ell_{L^\ell} \le C \left(\|v\|_{L^\ell}^{\frac{(1-\theta \gamma)\ell}{1-\theta}}+\tl \Theta^{\frac{1-\theta \gamma}{1-\theta}} \Lambda^{\frac{\rho}{1-\theta}}(\delta,\zeta,h))\right) \vee 1.
\end{split}
\Ees
where $C$ depends on $\alpha, \beta, p, q$,
$\Lambda(\delta, \zeta, h)$ is defined in Lemma \ref{l:IntUhXihEst} and
$$\tl \Theta=\re^{\frac{q-\rho(1+\beta+\delta)}{1-\theta \gamma}}
\zeta^{\frac{\rho(1+\beta+\delta)-q}{1-\theta \gamma}} \re^{\frac{q-\rho(1+\beta+\delta)}{1-\theta \gamma}\|h\|_H}.$$
\end{lem}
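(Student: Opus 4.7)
The plan is to transcribe the proof of Lemma \ref{l:ExtSol} to the controlled setting by replacing the stochastic a'priori estimates of Lemmas \ref{l:XiBou} and \ref{l:IntBarUXi} with the deterministic ones already established in Lemmas \ref{l:4.2} and \ref{l:IntUhXihEst}. Since the $u$-equation in \eqref{e:UhXiH} carries no noise, setting $w(t)=u_h^{\ell/2}(t)$ yields exactly the same energy identity
\Bes
\p_t\|w\|_{L^2}^2 = -\frac{4(\ell-1)}{\ell}\|\nabla w\|_{L^2}^2 - \ell\|w\|_{L^2}^2 + \frac{\ell}{\xi_h^q}\int_{\mcl O} u_h^{p-1+\ell}\dif x
\Ees
as \eqref{e:WNorEqn}, and no new calculation of an It\^o differential is needed.

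Next I would mimic the chain of estimates \eqref{e:LasEneEst}--\eqref{e:EneInqW}: factor the nonlinear term as $\xi_h^{\rho(1+\beta+\delta)-q}\,u_h^{p-1-\alpha\rho+\ell}\,(u_h^\alpha/\xi_h^{1+\beta+\delta})^\rho$, apply H\"older's inequality and the Gagliardo--Nirenberg inequality to the $u_h$-factors, and absorb the resulting $\|\nabla w\|_{L^2}^{2\theta\gamma}$ into the dissipation via Young's inequality. This produces
\Bes
\p_t\|w\|_{L^2}^2 \le C\,\xi_h^{\frac{\rho(1+\beta+\delta)-q}{1-\theta\gamma}}\left(\frac{\overline{u_h^\alpha}}{\xi_h^{1+\beta+\delta}}\right)^{\frac{\rho}{1-\gamma\theta}}\|w\|_{L^2}^{\frac{2\theta(1-\gamma)}{1-\theta\gamma}}.
\Ees
The condition $\delta<(q-\rho-\rho\beta)/\rho$ makes the exponent $\rho(1+\beta+\delta)-q$ strictly negative, so the $\xi_h$-prefactor is monotone in $\xi_h^{-1}$; substituting the lower bound $\xi_h(t)\ge \re^{-t-\|h\|_H}\zeta$ from Lemma \ref{l:4.2} then replaces that prefactor by the explicit constant $\tl\Theta$ of the statement.

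Writing $\eta(t)=\|w(t)\|_{L^2}^2$, integrating over $[0,1]$, and applying H\"older's inequality with exponent $\rho/(1-\gamma\theta)\in(0,1)$ together with Lemma \ref{l:IntUhXihEst} yields
\Bes
\sup_{0\le t\le 1}\eta(t)\le\eta(0) + C\,\tl\Theta\,\Lambda(\delta,\zeta,h)^{\frac{\rho}{1-\gamma\theta}}\,\Big(\sup_{0\le t\le 1}\eta(t)\Big)^{\frac{\theta(1-\gamma)}{1-\theta\gamma}}.
\Ees
If $\sup_t\eta(t)\le 1$ the claim is trivial; otherwise, since $\theta<1$ forces $\theta(1-\gamma)/(1-\theta\gamma)<1$, one isolates $\big(\sup_t\eta(t)\big)^{(1-\theta)/(1-\theta\gamma)}$ on the left and raises to the reciprocal power to reach the stated bound, using $\eta(0)=\|v\|_{L^\ell}^\ell$. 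The only genuine work here, and in my view the main point to verify, is that the stochastic quantities $\mcl M_\delta^*$ and $B_1^*$ entering Lemma \ref{l:ExtSol} are faithfully replaced by exponentials of the Cameron--Martin norm $\|h\|_H$; this is precisely what the Cauchy--Schwarz bound $\big|\int_0^t\xi_h^{-\delta}\dot h(s)\dif s\big|\le\big(\int_0^t\xi_h^{-2\delta}\dif s\big)^{1/2}\|h\|_H$ (used inside Lemma \ref{l:IntUhXihEst}) accomplishes, so no new analytic ingredient beyond the lemmas already proved is required.
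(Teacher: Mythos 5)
Your proposal is correct and is essentially the paper's own argument: the paper's proof simply says to repeat the derivation of \eqref{e:SupBouU} from Lemma \ref{l:ExtSol}, with the stochastic bounds replaced by the deterministic lower bound \eqref{e:XihGe} and the integral bound of Lemma \ref{l:IntUhXihEst}, so that $\Theta$ and $\Lambda(\delta,\zeta,B,\mcl M^*_\delta)$ become $\tl\Theta$ and $\Lambda(\delta,\zeta,h)$, and then concludes by the same case analysis below \eqref{e:SupBouU}. You have merely written out explicitly the steps the paper leaves as ``repeating the argument,'' and all the substitutions you make are the intended ones.
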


\begin{proof}
Repeating the argument for deriving \eqref{e:SupBouU} and using \eqref{e:MaxhH}, we get
\Bes
\begin{split}
\sup_{0 \le t \le 1} \eta(t)
 \le \eta(0)+C \tl \Theta \left(\int_0^1 \frac{\overline{u_h^{\alpha}}(s)}{\xi_h^{1+\beta+\delta}(s)} \dif s\right)^{\frac{\rho}{1-\gamma \theta}} \left(\sup_{0 \le t \le 1}\eta(t)\right)^{\frac{\theta(1-\gamma)}{1-\theta \gamma}},
\end{split}
\Ees
where $\eta(t)=\|u_h(t)\|^\ell_{L^\ell}$. By the same argument as that below \eqref{e:SupBouU}, we get the
desired inequality.
\end{proof}
\vskip 3mm

\begin{lem} \label{l:sup-u}
 Let $(u_h,\xi_h)$ be the solution of Eq. \eqref{e:UhXiH}. We have
\begin{eqnarray}\label{e:sup-u-xi}
\sup_{h\in {\mathcal{A}}^d_N}\|(u_h,\xi_h)\|_{C([0,1];C\times \mathbb{R})}\leq C
\end{eqnarray}
where $C$ depends on $N, \zeta, \|v\|_C, \alpha, \beta, p, q$.
\end{lem}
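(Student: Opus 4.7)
The plan is to combine the uniform $L^\ell$ estimate from Lemma \ref{l:LDP1} with the parabolic smoothing bound \eqref{e:L2StfEst}, in the spirit of the contradiction argument in the proof of Theorem \ref{t:MaiSol}. First I will establish a uniform lower bound on $\xi_h$, then a uniform $L^\ell$-bound on $u_h$, then bootstrap to a uniform $C$-bound on $u_h$, and finally deduce the upper bound on $\xi_h$.

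For $h\in\mcl A^d_N$, from $|h(t)|\le\|h\|_H\le N$ on $[0,1]$ by \eqref{e:MaxhH} and the lower bound \eqref{e:XihGe} one gets $\xi_h(t)\ge\re^{-1-N}\zeta$, whence $\xi_h^{-q}(t)\le K:=\re^{q(1+N)}\zeta^{-q}$ uniformly in $(h,t)$. Both $\tl\Theta$ and $\Lambda(\delta,\zeta,h)$ in Lemma \ref{l:LDP1} depend monotonically on $\|h\|_H$, hence are bounded uniformly over $\mcl A^d_N$ by constants depending only on $N,\zeta,\delta$. Picking $\ell$ sufficiently large to meet the hypotheses of Lemma \ref{l:LDP1} and using $\|v\|_{L^\ell}\le|\mcl O|^{1/\ell}\|v\|_C$, that lemma then yields a uniform bound
$$\sup_{h\in\mcl A^d_N}\sup_{t\in[0,1]}\|u_h(t)\|_{L^\ell}\le C_1,\qquad C_1=C_1(N,\zeta,\|v\|_C,\alpha,\beta,p,q).$$

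To bootstrap from $L^\ell$ to $C$, I would fix $\theta\ge 1$ with $\theta p\le\ell$ and $\alpha\in(d/(2\theta),1)$, so that $D(\mcl H^\alpha_\theta)\hookrightarrow C$. The mild formulation of Eq.~\eqref{e:UhXiH} together with \eqref{e:StConFpNor} and the uniform bound on $\xi_h^{-q}$ gives
$$\|u_h(t)\|_C\le\|v\|_C+K\int_0^t\|u_h(s)\|_C^p\,\dif s,$$
and comparison with $y'=Ky^p$, $y(0)=\|v\|_C$, produces some $t_1=t_1(\|v\|_C,K,p)>0$ (reduced if necessary so that $t_1\le t_0/2$) with $\sup_{[0,t_1]}\|u_h\|_C\le 2^{1/(p-1)}\|v\|_C$. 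For $t\in(t_1,1]$ I would apply \eqref{e:L2StfEst} on $[t^*,t]$, choosing $t^*\in[0,t)$ such that $t-t^*\in[t_1,t_0]$ (iterating over subintervals of length $\le t_0$ if $1>t_0$); by the $L^\ell$-bound above, both $\|u_h(t^*)\|_{L^\theta}$ and $\sup_{s\in[t^*,t]}\|u_h(s)\|_{L^{\theta p}}$ are controlled by a multiple of $C_1\vee\|v\|_C$. The smoothing estimate then produces $\|u_h(t)\|_{D(\mcl H^\alpha_\theta)}\lesssim t_1^{-\alpha}C_1+t_0^{1-\alpha}KC_1^p$, and the embedding $D(\mcl H^\alpha_\theta)\hookrightarrow C$ yields the uniform bound $\sup_{h,t}\|u_h(t)\|_C\le C_2$.

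Finally, $\overline{u_h^\alpha}(t)\le\|u_h(t)\|_C^\alpha\le C_2^\alpha$; plugging this together with $\|h\|_H\le N$ into \eqref{e:XihLe} gives $\sup_{h,t}\xi_h(t)\le C_3$, completing the proof. The main obstacle is the bootstrap in the third step: the $t^{-\alpha}$ factor in \eqref{e:L2StfEst} blows up at zero, precluding a single application from $t^*=0$ on the whole interval $[0,1]$, and the Gronwall-type comparison on the initial window $[0,t_1]$ is essential to bypass this singularity before switching to the semigroup estimate.
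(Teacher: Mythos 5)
Your proposal is correct and follows essentially the paper's own route: a uniform-in-$h$ $L^\ell$ bound from Lemma \ref{l:LDP1} (using that $\tl\Theta$ and $\Lambda(\delta,\zeta,h)$ are monotone in $\|h\|_H\le N$), a bootstrap to a uniform $C$-bound for $u_h$ via the smoothing estimate \eqref{e:L2StfEst} and Sobolev embedding exactly as in the proof of Theorem \ref{t:MaiSol}, and the bounds \eqref{e:XihGe}--\eqref{e:XihLe} for $\xi_h$. The only deviation is minor: on the initial time window the paper reuses the contraction-mapping bound of Lemma \ref{l:LocSol} (giving $\sup_{h}\|(u_h,\xi_h)\|_{C([0,T];C\times\R)}\le M$ with $T$ uniform over $\mcl A^d_N$), whereas you use a Bihari-type comparison on the mild formulation, and your explicit iteration over subintervals of length at most $t_0$ is a perfectly legitimate (indeed slightly more careful) way to handle the $t^{-\alpha}$ singularity in \eqref{e:L2StfEst}.
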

\begin{proof}
Similar as in the proof of Lemma \ref{l:LocSol}, set
\Bes
\begin{split}
\mathcal{A}_{T,M,N}=\bigg\{(u,\xi)\in C([0,T];C(\mathcal{O},\mathbb{R})& \times\mathbb{R}): u(t)\geq0,\ \xi(t)\geq \re^{-t-N}\zeta,\forall0\leq t\leq T;\\
                   &  u(0)=v,\ \xi(0)=\zeta;\ \|(u,\xi)\|_{C([0,T];C\times\mathbb{R})}\leq M \bigg\}
\end{split}
\Ees
with $M>2+\|v\|_C+\re^N\zeta$ and $T>0$ being some number depending on $N,M, \alpha,\beta, p, q$. By a similar argument as in the proof of
Lemma \ref{l:LocSol}, we have
\begin{eqnarray}\label{e:1}
\sup_{h\in {\mathcal{A}}^d_N}\|(u_h,\xi_h)\|_{C([0,T];C\times\mathbb{R})}\leq M.
\end{eqnarray}

\noindent To complete the proof, we only need to bound the solution on the time interval $[T,1]$. On the one hand,
by \eqref{e:XihLe}, \eqref{e:XihGe} and Lemma \ref{l:LDP1}, there exists some $\bar C$ depending only on $v, \zeta, N$ such that
\begin{eqnarray}\label{e:4}
\sup_{h\in{\mathcal{A}}^d_N}\|\xi_h\|_{C([0,1];\mathbb{R})}\leq \overline{C}.
\end{eqnarray}
Repeating the argument in the proof of Theorem \ref{t:MaiSol} and choosing $\alpha>\frac{d}{2 \theta}$, we have some $\hat C$ depending only
on $v, \zeta, \alpha, \beta, N$ such that
$$
\sup_{h\in{\mathcal{A}}^d_N}\sup_{T\leq t\leq1}\|u_h\|_{D(\mcl H^\alpha_p)}\leq \hat C.
$$
This and Sobolev embedding theorem further give
\begin{eqnarray}\label{e:2}
\sup_{h\in{\mathcal{A}}^d_N}\|u_h\|_{C([T/2,1];C)}\leq \widetilde{C}
\end{eqnarray}
where $\tl C$ depends only
on $v, \zeta, \alpha, \beta, N$.
Hence,
\begin{eqnarray}\label{e:3}
\sup_{h\in{\mathcal{A}}^d_N}\|(u_h, \xi_h)\|_{C([0,1];C \times \R)}\leq \tl C+\bar C.
\end{eqnarray}
The proof is complete.
\end{proof}

\begin{proof} [Proof of Proposition \ref{p:LDP2}]
Let all $C$ below be some numbers depending on $N, \zeta, \|v\|_C, \alpha, \beta, p, q$, whose exact values may vary from line to line.
Recall $S(t)=e^{(\Delta-1)t}$ and denote $\Lambda_{n,m}(t)=u_{g_n}(t)-u_{g_m}(t)$. Observe
$$
\Lambda_{n,m}(t)=\int_0^tS(t-s)\left(\frac{u^p_{g_n}(s)}{\xi^q_{g_n}(s)}-\frac{u^p_{g_m}(s)}{\xi^q_{g_m}(s)}\right)\dif s.
$$
Thanks to Lemma \ref{l:4.2} and Lemma \ref{l:sup-u}, we have
\begin{equation} \label{e:Lambda1}
\begin{split}
\|\Lambda_{n,m}(t)\|_C
&\leq \int_0^t \left\|\frac{u^p_{g_n}(s)}{\xi^q_{g_n}(s)}-\frac{u^p_{g_m}(s)}{\xi^q_{g_m}(s)}\right\|_C \dif s \\
&\leq
\int_0^t\frac{\left\|u^p_{g_n}(s)-u^p_{g_m}(s)\right\|_C}{\xi^q_{g_n}(s)}\dif s+\int_0^t\left\|u_{g_m}(s)\right\|^p_C
\left|\frac{1}{\xi^q_{g_n}(s)}-\frac{1}{\xi^q_{g_m}(s)}\right|\dif s \\
& \le C\int_0^t \Lambda_{m,n}(s) \dif s+C \int_0^t|\xi_{g_n}(s)-\xi_{g_m}(s)|\dif s.
\end{split}
\end{equation}

\noindent For all $s,t \in [0,1]$ and $g_n \in \mcl A^d_N$, by Lemma \ref{l:sup-u} and the second equation of \eqref{e:UhXiH}, we have
\begin{equation*}
\begin{split}
   |\xi_{g_n}(t)-\xi_{g_n}(s)|
&\leq
   \int_s^t \xi_{g_n}(r)\dif r+\int_s^t \frac{\overline{u^\alpha_{g_n}}(r)}{\xi_{g_n}^\beta(r)} \dif r+\int_s^t \xi_{g_n}(r)\left|\dot{g}_n(r)\right|
   \dif r\\
&\leq
   C(t-s)+C(t-s)+\left(\int_s^t|\xi_{g_n}(r)|^2\dif r\right)^{\frac 12}\left(\int_0^1|\dot{g}_n(r)|^2\dif r\right)^{\frac 12}\\
&\leq
      C(t-s)+C (t-s)^{\frac 12}.
\end{split}
\end{equation*}
 The above inequality clearly implies that $\{\xi_{g_n},\ n\geq1\}$ is equi-continuous. By $\rm Arzel\grave{a}$-Ascoli Theorem, there exist some
$\xi\in C([0,1];\mathbb{R})$ and a subsequence of $\{\xi_{g_n},\ n\geq1\}$ (say $\{\xi_{g_n},\ n\geq1\}$ without loss of generality) such that
\begin{eqnarray}\label{e:xi}
\lim_{n\rightarrow\infty}\|\xi_{g_n}-\xi\|_{C([0,1];\mathbb{R})}=0.
\end{eqnarray}
It follows from (\ref{e:XihGe}) and \eqref{e:sup-u-xi} that for all $t \in [0,1]$
$$\xi(t) \ge \re^{-t-\|h\|_H} \zeta.$$
Moreover, \eqref{e:xi} and \eqref{e:Lambda1} clearly imply that $\{u_{g_n},\ n\geq1\}$ is a Cauchy sequence in $C([0,1];C)$. Hence, there exists some $u\in C([0,1];C)$ so that
\Be \label{e:Un-UCon}
\lim_{n\rightarrow\infty}\|u_{g_n}-u\|_{C([0,1];C)}=0.
\Ee

\noindent Since
$$
u_{g_n}(t)=S(t)v+\int_0^tS(t-s)\frac{u^p_{g_n}(s)}{\xi^q_{g_n}(s)} \dif s,
$$
letting $n \rightarrow \infty$ we get
\begin{eqnarray}\label{e:u}
u(t)=S(t)v+\int_0^tS(t-s)\frac{u^p(s)}{\xi^q(s)}\dif s.
\end{eqnarray}
On the other hand, by \eqref{e:xi} and $g_n \rightharpoonup h$ in $H$,
\begin{equation*}
\begin{split}
& \ \ \ \ \int_0^t\xi_{g_n}(s)\dot{g}_n(s) \dif s-\int_0^t\xi(s)\dot{h}(s) \dif s\\
&=\int_0^t[\xi_{g_n}(s)-\xi(s)]\dot{g}_n(s)\dif s+\int_0^t\xi(s)[\dot{g}_n(s)-\dot{h}(s)] \dif s
\rightarrow 0
\end{split}
\end{equation*}
as $n \rightarrow \infty$. Let $n \rightarrow \infty$, the above limit and the following relation
$$
\xi_{g_n}(t)=\zeta-\int_0^t \xi_{g_n}(s) \dif s+\int_0^t \frac{\overline{u_{g_n}^\alpha}(s)}{\xi_{g_n}^\beta(s)} \dif s+\int_0^t\xi_{g_n}(s)\dot{g}_n(s)\dif s
$$
give
\Bes
\xi(t)=\zeta-\int_0^t \xi(s) \dif s+\int_0^t \frac{\overline{u^\alpha}(s)}{\xi^\beta(s)} \dif s+\int_0^t\xi(s)\dot{h}(s)\dif s
\Ees
which, together with \eqref{e:u}, implies that $(u,\xi)$ solve Eq. \eqref{e:UhXiH}. Thanks to the uniqueness, we have $(u,\xi)=(u_h,\xi_h)$ and
thus
$$\lim_{g_n \rightharpoonup h} \|(u_{g_n},\xi_{g_n})-(u_h, \xi_h)\|_{C([0,1];C \times \R)}=0.$$
\end{proof}

%
\subsection{Proof of Proposition \ref{p:LDP3}}
\begin{lem}
Let $\e>0$ be such that $2-\beta \e>0$ and $h_\e \in \mcl A^s_N$.
We have the following estimates
\Be \label{e:XiehGe}
\begin{split}
& \xi_{\e,h_\e}(t) \ge \re^{-\frac{(2-\e\beta)}2 t-N+\sqrt{\e} B_t} \zeta \ \ \ \forall t \in [0,1],
\end{split}
\Ee
\Be \label{e:XiehGeInf}
\begin{split}
&\inf_{0 \le t \le 1} \xi_{\e,h_\e}(t) \ge \re^{-1-N-\sqrt{\e} B^{*}_1} \zeta.
\end{split}
\Ee
Moreover, we have
\Be \label{e:XiehLeSup}
\begin{split}
&\sup_{0 \le t \le 1}\xi_{\e,h_\e}(t) \lesssim \re^{N+\sqrt{\e} B^{*}_1}\zeta^{1+\beta}+\re^{N+2\sqrt{\e} B^{*}_1}\left(\sup_{0 \le t \le 1}\overline{u_{\e,h_\e}^\alpha}(t)\right)^{\frac{1}{1+\beta}}.
\end{split}
\Ee
\end{lem}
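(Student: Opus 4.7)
The plan is to reduce the question to a linear SDE by applying It\^{o}'s formula to $\xi_{\e,h_\e}^{1+\beta}(t)$, exactly as in the proof of Lemma~\ref{l:XiBou} and Lemma~\ref{l:4.2}, and then to extract all three bounds from the resulting explicit variation-of-constants representation. Raising to the power $1+\beta$ removes the singular factor $\xi_{\e,h_\e}^{-\beta}$ in the drift and turns the equation into a linear one with multiplicative noise, which can then be written down in closed form.

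Concretely, It\^{o}'s formula applied to $f(\xi)=\xi^{1+\beta}$ together with the SDE for $\xi_{\e,h_\e}$ yields
\begin{align*}
\dif \xi_{\e,h_\e}^{1+\beta}(t) &= \tfrac{(1+\beta)(\e\beta-2)}{2}\,\xi_{\e,h_\e}^{1+\beta}(t)\,\dif t + (1+\beta)\,\overline{u_{\e,h_\e}^{\alpha}}(t)\,\dif t \\
&\quad + (1+\beta)\sqrt{\e}\, \xi_{\e,h_\e}^{1+\beta}(t)\,\dif B_t + (1+\beta)\,\xi_{\e,h_\e}^{1+\beta}(t)\,\dif h_\e(t).
\end{align*}
The only new ingredient compared with Lemma~\ref{l:XiBou} is the It\^{o} correction $\tfrac{\e\beta(1+\beta)}{2}$ produced by the $\sqrt{\e}\,\xi\,\dif B_t$ term, and the hypothesis $2-\e\beta>0$ is precisely what keeps the resulting linear drift dissipative. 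Variation of constants then gives
$$\xi_{\e,h_\e}^{1+\beta}(t) = E(t,0)\,\zeta^{1+\beta} + (1+\beta)\int_0^t E(t,s)\,\overline{u_{\e,h_\e}^{\alpha}}(s)\,\dif s,$$
where $E(t,s):=\exp\bigl\{(1+\beta)\bigl[\tfrac{\e\beta-2}{2}(t-s)+\sqrt{\e}(B_t-B_s)+h_\e(t)-h_\e(s)\bigr]\bigr\}$.

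From here each of the three claimed inequalities is a direct bookkeeping step. For \eqref{e:XiehGe}, drop the nonnegative integral term, take $(1+\beta)$-th roots, and use $h_\e(t)\ge -\|h_\e\|_H \ge -N$ coming from $h_\e\in\mcl A^s_N$ and \eqref{e:MaxhH}. For \eqref{e:XiehGeInf}, start from \eqref{e:XiehGe} and observe that on $[0,1]$ one has $-\tfrac{2-\e\beta}{2}t\ge -1$ (since $t\le 1$ and $\e\beta\ge 0$) together with $\sqrt{\e}B_t\ge -\sqrt{\e}B^{*}_1$. For \eqref{e:XiehLeSup}, bound $E(t,s)$ above by $\exp\{(1+\beta)(2\sqrt{\e}B^{*}_1+N)\}$ using $\tfrac{\e\beta-2}{2}(t-s)\le 0$, $|B_t-B_s|\le 2B^{*}_1$ and $|h_\e(t)-h_\e(s)|\le N$, and then extract the $(1+\beta)$-th root via the subadditivity $(a+b)^{1/(1+\beta)}\lesssim a^{1/(1+\beta)}+b^{1/(1+\beta)}$.

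There is no substantive obstacle here: the argument is essentially mechanical once $1+\beta$ is identified as the correct exponent for cancelling the $\xi_{\e,h_\e}^{-\beta}$ singularity. The only point requiring mild care, and the reason the hypothesis $2-\e\beta>0$ is imposed, is to ensure that the new It\^{o} correction produced by the $\sqrt{\e}\,\xi\,\dif B_t$ term (absent in the deterministic-control Lemma~\ref{l:4.2}) does not overturn the sign of the linear drift coefficient.
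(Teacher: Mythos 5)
Your proposal follows exactly the paper's route: apply It\^{o}'s formula to $\xi_{\e,h_\e}^{1+\beta}$, which linearises the equation, write the solution in variation-of-constants form, and read off the three bounds using \eqref{e:MaxhH}, $\|h_\e\|_H\le N$ and $|B_t-B_s|\le 2B^{*}_1$; the bookkeeping you describe for the three inequalities is precisely what the paper leaves implicit when it says the explicit identity ``clearly implies'' them.

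One caveat, which in fact applies to the paper's own displayed formula as well: the kernel $E(t,s)$ you write down is not the fundamental solution of the SDE you derived two lines earlier. For $\dif Z=-\tfrac{(1+\beta)(2-\e\beta)}{2}Z\,\dif t+\sqrt{\e}(1+\beta)Z\,\dif B_t+(1+\beta)Z\,\dif h_\e(t)$ the stochastic exponential carries the additional It\^{o} correction $-\tfrac{\e(1+\beta)^2}{2}(t-s)$, so the exponent should be $-\tfrac{(1+\beta)(2+\e)}{2}(t-s)+\sqrt{\e}(1+\beta)(B_t-B_s)+(1+\beta)\bigl(h_\e(t)-h_\e(s)\bigr)$; this is exactly what the paper's Lemma \ref{l:XiBou} produces in the case $\e=1$, where the rate is $-\tfrac32(1+\beta)$ and not $-\tfrac{(1+\beta)(2-\beta)}{2}$. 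With the corrected kernel the two lower bounds come out with slightly weaker constants (one gets $\xi_{\e,h_\e}(t)\ge \re^{-\frac{2+\e}{2}t-N+\sqrt{\e}B_t}\zeta$ and hence $\inf_{0\le t\le1}\xi_{\e,h_\e}(t)\ge\re^{-\frac32-N-\sqrt{\e}B^{*}_1}\zeta$ for $\e\le1$), while \eqref{e:XiehLeSup} is unchanged since there the negative drift is simply discarded; none of the later estimates depend on these precise constants, so the fix is harmless, but as written your derivation (like the paper's) does not actually yield \eqref{e:XiehGe}--\eqref{e:XiehGeInf} with the stated exponents. Relatedly, your closing remark that $2-\e\beta>0$ is ``precisely what keeps the drift dissipative'' is not quite the right accounting: once the It\^{o} correction is included, the decay rate is $-\tfrac{(1+\beta)(2+\e)}{2}<0$ irrespective of the sign of $2-\e\beta$.
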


\begin{proof}
We simply write $u=u_{\e, h_\e}$,  $\xi=\xi_{\e, h_\e}$ and $h=h_\e$.
By It$\hat{o}$ formula, we have
\Be
\begin{split}
\dif \xi^{1+\beta}(t)&=-\frac 12(1+\beta)(2-\e \beta) \xi^{1+\beta}(t) \dif t+(1+\beta) \xi^{1+\beta}(t) \dif h(t) \\
&\ \ \ \ +\sqrt{\e} (1+\beta) \xi^{1+\beta}(t)\dif B_t +(1+\beta)\overline{u^{\alpha}}(t) \dif t,
\end{split}
\Ee
which clearly implies
\Bes
\begin{split}
\xi^{1+\beta}(t)&=\re^{-\frac{(1+\beta)(2-\e\beta)}2 t+(1+\beta)h(t)+\sqrt{\e} (1+\beta) B_t} \zeta^{1+\beta} \\
&+(1+\beta)\int_0^t
\re^{-\frac{(1+\beta)(2-\e\beta)}2 (t-s)+(1+\beta)(h(t)-h(s))+\sqrt{\e} (1+\beta) (B_t-B_s)}  \overline{u^{\alpha}}(s) \dif s \\
& \le \re^{(1+\beta)\|h\|_H+\sqrt{\e} (1+\beta) B^*_1} \zeta^{1+\beta}+(1+\beta)\int_0^t
\re^{(1+\beta)\|h\|_H+2\sqrt{\e} (1+\beta) B^*_1}  \overline{u^{\alpha}}(s) \dif s,
\end{split}
\Ees
where the last inequality is by \eqref{e:MaxhH}. The above inequality clearly implies the three desired inequalities.
\end{proof}

Let $\delta>0$, define
$$\mcl M_{\e,\delta}(t)=\int_0^t \xi_{\e,h_\e}^{-\delta}(s) \dif B_s, \ \ \ \ \mcl M^{*}_{\e,\delta}=\sup_{0\le t \le 1}|\mcl M_{\e,\delta}(t)|.$$
\begin{lem} \label{e:M*BDG}
Let $\mu>0$ and $\delta>0$, for all $\e \in [0,1]$ we have
\Be
\begin{split}
\E \left(\mcl M^{*}_{\e,\delta}\right)^\mu \le C
\end{split}
\Ee
where $C$ depends only on $\mu, N, \delta$ and $\zeta$. Moreover, we have
\Be
\mcl M^{*}_{\e,\delta}<\infty \ \ \ \ \ a.s..
\Ee
\end{lem}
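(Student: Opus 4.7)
The plan is to follow the approach of Lemma \ref{l:EstOme}, replacing Doob's $L^2$ inequality by the Burkholder-Davis-Gundy (BDG) inequality so as to reach an arbitrary moment $\mu > 0$, while keeping constants uniform in $h_\e \in \mcl A^s_N$. The key input is the pathwise infimum bound \eqref{e:XiehGeInf}, which already absorbs $\|h_\e\|_H \le N$ through an $\re^{-N}$ factor independent of $h_\e$ itself; the integrand $\xi_{\e,h_\e}^{-\delta}$ in the definition of $\mcl M_{\e,\delta}$ is therefore a bona fide adapted process and BDG applies.

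For $\mu \ge 2$ I would apply BDG to the martingale $\mcl M_{\e,\delta}$, whose quadratic variation is $\int_0^t \xi_{\e,h_\e}^{-2\delta}(s)\,\dif s$, to obtain
$$
\E(\mcl M^{*}_{\e,\delta})^\mu \le C_\mu\, \E \left(\int_0^1 \xi_{\e,h_\e}^{-2\delta}(s)\,\dif s\right)^{\mu/2},
$$
where $C_\mu$ is the universal BDG constant. Since $[0,1]$ carries unit Lebesgue measure and $x\mapsto x^{\mu/2}$ is convex for $\mu\ge 2$, Jensen's inequality dominates the right-hand side by $C_\mu \int_0^1 \E\,\xi_{\e,h_\e}^{-\mu\delta}(s)\,\dif s$. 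Inserting \eqref{e:XiehGeInf} pathwise then yields
$$
\E(\mcl M^{*}_{\e,\delta})^\mu \le C_\mu\, \zeta^{-\mu\delta}\, \re^{\mu\delta(1+N)}\, \E \re^{\mu\delta \sqrt{\e}\, B^{*}_1}.
$$
By the density estimate \eqref{e:DenB*t}, $B^{*}_1$ has Gaussian-type tails, so the last expectation is finite and bounded uniformly for $\e \in [0,1]$; this yields a bound depending only on $\mu, \delta, N, \zeta$.

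The case $0 < \mu < 2$ reduces to the $\mu = 2$ case via Hölder's inequality, and the almost sure finiteness $\mcl M^{*}_{\e,\delta} < \infty$ then follows immediately from Markov's inequality (any finite moment implies a.s.\ finiteness). The main point to verify is uniformity in $h_\e \in \mcl A^s_N$, but this is automatic since the bound \eqref{e:XiehGeInf} depends on $h_\e$ only through the a priori constraint $\|h_\e\|_H \le N$. There is no substantial obstacle beyond careful bookkeeping of the BDG and exponential-moment constants.
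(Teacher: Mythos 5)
Your proposal is correct and follows essentially the same route as the paper: BDG for the martingale, then Hölder/Jensen on the unit time interval to pass from $\big(\int_0^1 \xi^{-2\delta}\,\dif s\big)^{\mu/2}$ to $\int_0^1 \E\,\xi^{-\mu\delta}\,\dif s$, then the pathwise lower bound \eqref{e:XiehGeInf} (uniform in $h_\e\in\mcl A^s_N$) together with the Gaussian tail \eqref{e:DenB*t} for $B^*_1$, with the small-$\mu$ case and a.s.\ finiteness obtained exactly as in the paper.
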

\begin{proof}
We only have to show the desired inequality for the case $\mu>2$ since the case of $0<\mu \le 2$ is
an immediate corollary from the former. We simply write $\xi_\e=\xi_{\e,h_\e}$.

By Burkholder-Davis-Gundy inequality and H\"{o}lder inequality, we have
\Bes
\begin{split}
\E \left(\mcl M^{*}_{\e,\delta}\right)^\mu & \le C\E \left[\int_0^1 \xi_{\e}^{-2\delta}(s) \dif s\right]^{\frac {\mu} 2}
\le C\left[\int_0^1 \E \xi_{\e}^{-\mu \delta}(s) \dif s\right].
\end{split}
\Ees
which, together with \eqref{e:XiehGeInf}, further gives
\Bes
\begin{split}
\E \left(\mcl M^{*}_{\e,\delta}\right)^\mu  \le C\E \re^{\mu \delta+\mu \delta N+\mu \delta \sqrt{\e} B^{*}_1} \zeta^{-\mu \delta}.
\end{split}
\Ees
The desired inequality immediately follows from the above inequality and \eqref{e:DenB*t}. The second inequality is a direct corollary from the first one.
\end{proof}

\begin{lem}  \label{l:IntUhXiheEst}
Let $\e>0$ be such that $2-\beta \e>0$ and let $h_\e \in \mcl A^s_N$. For all $\delta>0$, we have
$$\int_0^1 \frac{\overline{u_{\e,h_\e}^{\alpha}}(s)}{\xi_{\e,h_\e}^{1+\beta+\delta}(s)} \dif s \le \Lambda(\zeta,\e, B,N,\delta,\mcl M^*_{\e,\delta}),$$
where
\Bes
\begin{split}
\Lambda(\zeta,\e, B,N,\delta,\mcl M^*_{\e,\delta})=
\delta^{-1} \zeta^{-\delta}&+\frac{(2+\e+\delta\e+2 N)\re^{\delta+\delta N+\delta \sqrt{\e} B^{*}_1}}2 \zeta^{-\delta}+\sqrt{\e} \mcl M^*_{\e,\delta}
\end{split}
\Ees
\end{lem}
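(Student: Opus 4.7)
The plan is to mimic the proofs of Lemma \ref{l:IntBarUXi} and Lemma \ref{l:IntUhXihEst}, namely, apply It\^{o}'s formula to $f(\xi_{\e,h_\e})$ with $f(x)=x^{-\delta}$ and solve for the integral of interest. The drift of $\xi_{\e,h_\e}$ contributes the term $-\xi_{\e,h_\e}+\overline{u_{\e,h_\e}^\alpha}/\xi_{\e,h_\e}^\beta$, the diffusion $\sqrt{\e}\xi_{\e,h_\e}\dif B_t$ generates an additional It\^{o}-correction $\tfrac{\e\delta(\delta+1)}{2}\xi_{\e,h_\e}^{-\delta}\dif t$ together with the martingale increment $-\delta\sqrt{\e}\,\xi_{\e,h_\e}^{-\delta}\dif B_t$, and the finite-variation perturbation $\xi_{\e,h_\e}\dif h_\e(t)$ produces the extra drift $-\delta\,\xi_{\e,h_\e}^{-\delta}\dot h_\e(t)\dif t$. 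Putting all pieces together and dividing by $\delta$ yields
\begin{equation*}
\int_0^t\frac{\overline{u_{\e,h_\e}^\alpha}(s)}{\xi_{\e,h_\e}^{1+\beta+\delta}(s)}\dif s
\le \delta^{-1}\zeta^{-\delta}+\Bigl(1+\tfrac{\e(\delta+1)}{2}\Bigr)\!\int_0^t\xi_{\e,h_\e}^{-\delta}(s)\dif s
+\sqrt{\e}\,|\mcl M_{\e,\delta}(t)|+\Bigl|\int_0^t\xi_{\e,h_\e}^{-\delta}(s)\dot h_\e(s)\dif s\Bigr|,
\end{equation*}
where the negative term $-\xi_{\e,h_\e}^{-\delta}(t)$ is dropped in the upper bound.

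Next I would bound each of the three remaining quantities using the tools already in place. For the Lebesgue integral, the lower bound \eqref{e:XiehGeInf} gives $\xi_{\e,h_\e}^{-\delta}(s)\le \re^{\delta+\delta N+\delta\sqrt{\e}B_1^*}\zeta^{-\delta}$ uniformly on $[0,1]$, so $\int_0^t\xi_{\e,h_\e}^{-\delta}\dif s$ is dominated by the same exponential. For the $\dot h_\e$ term, Cauchy--Schwarz combined with $\|h_\e\|_H\le N$ gives
\begin{equation*}
\Bigl|\int_0^t\xi_{\e,h_\e}^{-\delta}(s)\dot h_\e(s)\dif s\Bigr|
\le\Bigl(\int_0^1\xi_{\e,h_\e}^{-2\delta}(s)\dif s\Bigr)^{1/2}\|h_\e\|_H
\le N\,\re^{\delta+\delta N+\delta\sqrt{\e}B_1^*}\zeta^{-\delta}.
\end{equation*}
Finally $|\mcl M_{\e,\delta}(t)|\le \mcl M^*_{\e,\delta}$ by definition. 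Collecting the coefficients, one reads off
$1+\tfrac{\e(1+\delta)}{2}+N=\tfrac{2+\e+\delta\e+2N}{2}$, which is exactly the constant appearing in $\Lambda(\zeta,\e,B,N,\delta,\mcl M^*_{\e,\delta})$; thus the sum is at most $\Lambda$ for every $t\in[0,1]$.

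There is no real obstacle beyond bookkeeping: the only subtle point is to keep track of the sign conventions and make sure the It\^{o} correction $\tfrac{\e\delta(\delta+1)}{2}$ lands inside the factor $1+\tfrac{\e(\delta+1)}{2}$ (after division by $\delta$) so as to reproduce the prefactor $\tfrac{2+\e+\delta\e+2N}{2}$ of $\Lambda$. The hypothesis $2-\beta\e>0$ is not invoked here; it was needed for the validity of \eqref{e:XiehGe}--\eqref{e:XiehGeInf}, which in turn justify the pointwise bound on $\xi_{\e,h_\e}^{-\delta}$ used above.
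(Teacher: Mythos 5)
Your proposal is correct and follows essentially the same route as the paper: apply It\^{o}'s formula to $\xi_{\e,h_\e}^{-\delta}$, drop the nonpositive term, bound the Lebesgue and $\dot h_\e$ integrals via the lower bound \eqref{e:XiehGeInf} together with Cauchy--Schwarz and $\|h_\e\|_H\le N$, and absorb the stochastic integral into $\sqrt{\e}\,\mcl M^*_{\e,\delta}$. The bookkeeping of the It\^{o} correction, yielding the prefactor $\tfrac{2+\e+\delta\e+2N}{2}$, matches the paper exactly.
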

\begin{proof}
For the notational simplicity, we shall write $\xi(t)=\xi_{\e, h_\e}(t)$ and $u(t)=u_{\e,h_\e}(t)$.
Applying It$\hat{o}$ formula to $\xi^{-\delta}(t)$, we get
\Bes
\begin{split}
\xi^{-\delta}(t)-\zeta^{-\delta}=&\frac{\delta\left(2+\e+\delta\e \right)}2\int_0^t \xi^{-\delta}(s) \dif s-\delta \int_0^t \frac{\overline{u^{\alpha}}(s)}{\xi^{1+\delta+\beta}(s)} \dif s \\
&-\delta \int_0^t \xi^{-\delta}(s) \dot h_s \dif s-\delta \sqrt{\e} \int_0^t \xi^{-\delta}(s) \dif B_s,
\end{split}
\Ees
which gives
\Bes
\begin{split}
\int_0^t \frac{\overline{u^{\alpha}}(s)}{\xi^{1+\delta+\beta}(s)} \dif s
& \le \delta^{-1} \zeta^{-\delta}+\frac{2+\e+\delta\e}2 \int_0^t \xi^{-\delta}(s) \dif s \\
&+\sup_{0 \le t \le 1} \left|\int_0^t \xi^{-2\delta}(s) \dif s\right|^{\frac 12}\|h\|_H+\sqrt{\e} \sup_{0 \le t \le 1}\left|\int_0^t \xi^{-\delta}(s) \dif B_s\right|\\
& \le \delta^{-1} \zeta^{-\delta}+\frac{(2+\e+\delta\e+2 \|h_\e\|_H)\re^{\delta+\delta \|h_\e\|_H+\delta \sqrt{\e} B^{*}_1}}2 \zeta^{-\delta} \\
&
+ \sqrt{\e}\sup_{0 \le t \le 1}\left|\int_0^t \xi^{-\delta}(s) \dif B_s\right|
\end{split}
\Ees
where the last inequality is by \eqref{e:XiehGeInf}. This clearly implies the desired inequality.
\end{proof}

\begin{lem} \label{l:LDPUe}
 Let $\rho, \ell,\theta, \gamma$ be the same as those in Lemma \ref{l:ExtSol}. Let $h_\e \in \mcl A^s_N$ and $\delta \in (0, \frac{q-\rho-\rho \beta}{\rho})$.
As $\ell$ is sufficiently large so that $\theta \in (0,1)$, $\gamma \in (0,1)$ and $\frac{\rho}{1-\gamma \theta} \in (0,1)$, we have
 \Bes
 \sup_{0\le t \le 1} \|u_{\e,h_\e}(t)\|^\ell_{L^\ell} \le C \left(\|v\|_{L^\ell}^{\frac{(1-\theta \gamma)\ell}{1-\theta}}+\hat \Theta^{\frac{1-\theta \gamma}{1-\theta}} \Lambda^{\frac{\rho}{1-\theta}}(\zeta,\e, B,N,\delta,\mcl M^*_{\e,\delta})\right) \vee 1,
 \Ees
 where $C$ depends on $\alpha, \beta, p, q$,
 $\Lambda(\zeta, \e, B,N,\delta,\mcl M^*_{\e,\delta})$ is defined in Lemma \ref{l:IntUhXiheEst} and
$$\hat \Theta=
\re^{(1+N)\frac{q-\rho(1+\beta+\delta)}{1-\theta \gamma}} \zeta^{\frac{\rho(1+\beta+\delta)-q}{1-\theta \gamma}} \re^{\frac{q-\rho(1+\beta+\delta)}{1-\theta \gamma}\sqrt{\e} B^*_1}.$$
\end{lem}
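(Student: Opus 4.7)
The plan is to mirror the energy argument used in Lemma \ref{l:ExtSol} (for the SPDE \eqref{e:SGM}) and Lemma \ref{l:LDP1} (for the skeleton \eqref{e:UhXiH}), merging the two kinds of controls: the stochastic Brownian perturbation $\sqrt{\e}\,\xi\,\dif B_t$ (handled through $B^*_1$) and the drift perturbation $\xi\,\dif h_\e(t)$ (handled through $\|h_\e\|_H\le N$). Both of these feed only into the lower bound for $\xi_{\e,h_\e}$ and into the estimate for $\int_0^1 \overline{u^\alpha}/\xi^{1+\beta+\delta}\,\dif s$, which are already supplied by \eqref{e:XiehGeInf} and Lemma \ref{l:IntUhXiheEst}, respectively. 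The $u$-equation in \eqref{e:UhXiEh} is the \emph{same} deterministic PDE as in \eqref{e:SGM} and \eqref{e:UhXiH}, so the core energy identity for $w=u_{\e,h_\e}^{\ell/2}$ will be identical.

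First, I would set $w(t)=u_{\e,h_\e}^{\ell/2}(t)$ and derive, exactly as in \eqref{e:WNorEqn},
\[
\p_t\|w\|_{L^2}^2 = -\tfrac{4d(\ell-1)}{\ell}\|\nabla w\|_{L^2}^2-\ell\|w\|_{L^2}^2+\frac{\ell}{\xi_{\e,h_\e}^q}\int_{\mcl O}u_{\e,h_\e}^{p-1+\ell}\,\dif x.
\]
Then I would decompose $u_{\e,h_\e}^{p-1+\ell}=u_{\e,h_\e}^{\alpha\rho}\,u_{\e,h_\e}^{p-1-\alpha\rho+\ell}$, factor out $\xi_{\e,h_\e}^{\rho(1+\beta+\delta)-q}\bigl(\overline{u_{\e,h_\e}^\alpha}/\xi_{\e,h_\e}^{1+\beta+\delta}\bigr)^\rho$, and apply Hölder's inequality together with the Gagliardo-Nirenberg inequality to obtain the bound analogous to \eqref{e:LasEneEst}. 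The assumption $\delta\in(0,(q-\rho-\rho\beta)/\rho)$ yields $\rho(1+\beta+\delta)-q<0$, so replacing $\xi_{\e,h_\e}$ by $\inf_{0\le s\le 1}\xi_{\e,h_\e}(s)$ is admissible and, by \eqref{e:XiehGeInf}, produces exactly the factor
\[
\bigl(\inf_{0\le s\le 1}\xi_{\e,h_\e}(s)\bigr)^{\frac{\rho(1+\beta+\delta)-q}{1-\theta\gamma}} \le \hat\Theta.
\]

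Next I would apply Young's inequality (with a small parameter) to absorb the $(\|\nabla w\|_{L^2}+\|w\|_{L^2})^{2\theta\gamma}$ factor into the dissipation terms on the right of the energy identity, arriving at the differential inequality
\[
\p_t\|w\|_{L^2}^2 \le C\,\hat\Theta\,\Bigl(\tfrac{\overline{u_{\e,h_\e}^\alpha}(t)}{\xi_{\e,h_\e}^{1+\beta+\delta}(t)}\Bigr)^{\!\frac{\rho}{1-\gamma\theta}}\|w(t)\|_{L^2}^{\frac{2\theta(1-\gamma)}{1-\theta\gamma}},
\]
in complete parallel with \eqref{e:EneInqW}. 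Integrating in time, using Hölder's inequality on the right (since $\rho/(1-\gamma\theta)<1$ for large $\ell$), and substituting the bound from Lemma \ref{l:IntUhXiheEst} gives
\[
\sup_{0\le t\le 1}\|w(t)\|_{L^2}^2 \le \|v\|_{L^\ell}^{\ell}+C\,\hat\Theta\,\Lambda^{\frac{\rho}{1-\gamma\theta}}(\zeta,\e,B,N,\delta,\mcl M^*_{\e,\delta})\bigl(\sup_{0\le t\le 1}\|w(t)\|_{L^2}^2\bigr)^{\frac{\theta(1-\gamma)}{1-\theta\gamma}}.
\]
Finally, exactly as in the endgame of Lemma \ref{l:ExtSol}, splitting cases $\sup\|w\|_{L^2}^2\le 1$ and $>1$ and rearranging the exponents yields the stated bound.

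The only nontrivial point, and what I would double-check carefully, is the derivation of the correct form of $\hat\Theta$: the infimum bound \eqref{e:XiehGeInf} has the exponent $-1-N-\sqrt{\e}B^*_1$, so raising $\zeta$ to the negative power $\rho(1+\beta+\delta)-q<0$ converts this into $\re^{(1+N)(q-\rho(1+\beta+\delta))/(1-\theta\gamma)}\,\re^{\sqrt{\e}B^*_1(q-\rho(1+\beta+\delta))/(1-\theta\gamma)}\,\zeta^{(\rho(1+\beta+\delta)-q)/(1-\theta\gamma)}$, which matches the $\hat\Theta$ in the statement. No new analytic difficulty arises beyond what was already handled in Lemmas \ref{l:ExtSol} and \ref{l:LDP1}; the proof is essentially a verification that the combined control $\|h_\e\|_H\le N$ and Brownian part are absorbed into the factors $\hat\Theta$ and $\Lambda$.
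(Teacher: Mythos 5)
Your proposal is correct and follows essentially the same route as the paper: the paper's proof simply says to repeat the derivation of \eqref{e:SupBouU} with the lower bound \eqref{e:XiehGeInf} and the bound \eqref{e:MaxhH} (giving $\hat\Theta$ in place of $\Theta$), insert Lemma \ref{l:IntUhXiheEst}, and rerun the case-splitting argument below \eqref{e:SupBouU}, which is exactly what you spell out. Your verification of the exponent bookkeeping for $\hat\Theta$ from \eqref{e:XiehGeInf} is also consistent with the statement.
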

\begin{proof}
Repeating the argument for getting \eqref{e:SupBouU} and using \eqref{e:MaxhH}, we get
\Be \label{e:SupBouSto}
\begin{split}
\sup_{0 \le t \le 1} \eta(t)
 \le \eta(0)+C \hat \Theta \left(\int_0^1 \frac{\overline{u_{\e,h_\e}^{\alpha}}(s)}{\xi_{\e,h_\e}^{1+\beta+\delta}(s)} \dif s\right)^{\frac{\rho}{1-\gamma \theta}} \left(\sup_{0 \le t \le 1}\eta(t)\right)^{\frac{\theta(1-\gamma)}{1-\theta \gamma}},
\end{split}
\Ee
where $\eta(t)=\|u_{\e,h_\e}(t)\|^\ell_{L^\ell}$. Repeating the argument below \eqref{e:SupBouU}, we immediately get the desired inequality.
\end{proof}
\ \ \ \

\begin{proof} [Proof of Proposition \ref{p:LDP3}]
For the notational simplicity, we shall write $u_\e=u_{\e,h_\e}$ and $\xi_\e=\xi_{\e,h_\e}$.
We choose $\ell>0$ in Lemma \ref{l:LDPUe} be sufficiently large so that $\ell>2\alpha$ and fix it. We also fix the number $\rho,\theta, \gamma, \delta$ in Lemma \ref{l:LDPUe}.
By their definitions, $\ell,\rho,\theta, \gamma, \delta$ are all some fixed numbers depending on $\alpha, \beta, p, q$.
Let all $C$ below be some numbers depending on $\zeta, v, \alpha, \beta, p, q$ and $N$, whose exact values may vary from one to one.
We shall prove the proposition by the following two steps.
\vskip 3mm

{\bf (Step 1)} We shall prove in Step 2 below that
there exist some $\xi \in C([0,1],\R)$ and a subsequence $\{\xi_{\e_n}\}$ with $\lim_{n \rightarrow \infty} \e_n=0$ such that
\Be \label{e:SubSeqCon}
\lim_{n \rightarrow \infty} \xi_{\e_n}=\xi \ \ \  {\rm \ in \ distribution \ under \ the \ topology } \ C([0,1],\R).
 \Ee
 By Skorohod embedding theorem, there exist a probability space
$(\hat \Omega, \hat{\mcl F}, \hat{\PP})$ and random variables $\{\hat{\xi}_{\e_n}\}$ and $\hat{\xi}$ which have the same distributions as
 $\{{\xi}_{\e_n}\}$ and ${\xi}$ respectively, such that
$$\lim_{n \rightarrow \infty} \|\hat{\xi}_{\e_n}-\hat{\xi}\|_{C([0,1];\R)}=0 \ \ \ \ \ a.s..$$

\noindent Consider the equations
\Bes
\p_t \hat u_{\e_n}=\Delta \hat u_{\e_n}-\hat u_{\e_n}+\frac{\hat u_{\e_n}^p}{\hat \xi_{\e_n}^q}, \ \ \ \hat u_{\e_n}(0)=v,
\Ees
\Be \label{e:BarUEqn}
\p_t \hat u=\Delta \hat u-\hat u+\frac{\hat u^p}{\hat \xi^q}, \ \ \ \hat u(0)=v,
\Ee
both with the same boundary condition, by the same argument as in the proof of Proposition \ref{p:LDP2}, we get
\Be
\lim_{n \rightarrow \infty}\|\hat u_{\e_n}-\hat u\|_{C([0,1];C)}=0 \ \ \ \ a.s..
\Ee
It is clear that the distribution of $(\hat u_{\e_n}, \hat{\xi}_{\e_n})$ is the same as those of $(u_{\e_n}, {\xi}_{\e_n})$.
By  \eqref{e:MarTerm} below, we have
$$\lim_{\e \rightarrow 0+}\E \sqrt{\e} \sup_{0 \le t \le 1} \left|\int_0^t \xi_\e \dif B_s\right|=0.$$
Hence, up to taking a subsequence, we have
$$\lim_{n \rightarrow \infty} \sqrt{\e_n}  \sup_{0 \le t \le 1} \left|\int_0^t \xi_{\e_n} \dif B_s\right|=0.$$
By the same argument as in the proof of Proposition \ref{p:LDP2}, we get
\Be \label{e:BarXiEqn}
\hat \xi(t)=\zeta-\int_0^t \hat \xi(s) \dif s+\int_0^t \frac{\overline{\hat u^\alpha}(s)}{\hat{\xi}^\beta(s)} \dif s+\int_0^t\hat \xi(s)\dot{h}(s)\dif s.
\Ee
\eqref{e:BarUEqn} and \eqref{e:BarXiEqn} yield that $(\hat u,\hat \xi)$ satisfies Eq. \eqref{e:UhXiH}. By uniqueness of the solution, $(\hat u,\hat \xi)$
and $(u_h,\xi_h)$ have the same distribution.
Hence, we complete the proof up to showing \eqref{e:SubSeqCon}. \\

{\bf (Step 2)} Now we show \eqref{e:SubSeqCon}. To this end, it suffices to prove the following asymptotic tightness criterion (\cite[Theorem 2.1]{ko06}):
\begin{itemize}
\item[(i)] For any $0 \le t_1<t_2<...<t_n \le 1$ with $n \in \N$, the distribution of $(\xi_\e(t_1),...,\xi_\e(t_n))_{0 \le \e \le 1}$ is tight.
\item[(ii)] For all $\lambda>0$
\Be
\lim_{\delta \rightarrow 0} \lim_{\e \rightarrow 0} \sup \PP \left\{\sup_{\stackrel{0 \le s<t \le 1}{|t-s| \le \delta}} |\xi_\e(t)-\xi_\e(s)|>\lambda\right\}=0.
\Ee
\end{itemize}

First of all, for all $\nu>0$, by H\"{o}lder inequality and Lemma \ref{l:LDPUe} we have
\Bes
\begin{split}
\left[\sup_{0 \le t \le 1}\overline{u_{\e}^{\alpha}}(t)\right]^{\nu} & \le  \left(\sup_{0 \le t \le 1}\|u_{\e}(t)\|^\ell_{L^\ell}\right)^{\frac{\nu \alpha}{\ell}} \le C \left[\re^{c_1 B^{*}_1}+\re^{c_2 B^{*}_1} \left(\mcl M^*_{\e,\delta}\right)^{c_3}\right],
\end{split}
\Ees
where $c_1,c_2, c_3$ all depend on $\alpha, \beta, p, q, \nu$. Thanks to Lemma \ref{e:M*BDG} and \eqref{e:DenB*t}, using H\"{o}lder and the above inequalities we have
\Be \label{e:UAlpEst}
\E \left[\sup_{0 \le t \le 1}\overline{u_{\e}^{\alpha}}(t)\right]^{\nu} \le C.
\Ee
Thanks to \eqref{e:XiehGeInf} and \eqref{e:DenB*t}, by similar but easier argument we get
\Be \label{e:XiNuEst}
\E \left[\inf_{0 \le t \le 1} \xi_{\e}(t)\right]^{-\nu} \le C.
\Ee
By H\"{o}lder inequality and \eqref{e:XiehLeSup} we have
\Bes
\begin{split}
\sup_{0 \le t \le 1}\xi_{\e}^{2}(t) &\lesssim \re^{2N+2 \sqrt{\e} B^{*}_1}+\re^{2N+4\sqrt{\e} B^{*}_1}\left(\sup_{0 \le t \le 1}\overline{u_{\e}^{\alpha}}(t)\right)^{\frac{2}{1+\beta}} \\
&\lesssim \re^{2N+2 B^{*}_1}+\re^{2N+4 B^{*}_1}\left(\sup_{0 \le t \le 1}\overline{u_{\e}^{\alpha}}(t)\right)^{\frac{2}{1+\beta}}.
\end{split}
\Ees
Thanks to \eqref{e:UAlpEst} and \eqref{e:DenB*t}, using H\"{o}lder and the above inequalities we have
\Be \label{e:SupXiEEst}
\begin{split}
\E \sup_{0 \le t \le 1}\xi_{\e}^{2}(t) \le C.
\end{split}
\Ee
For all small $c>0$, choosing $K=\sqrt{\frac{C}{c}}$,  by the Chebyshev inequality there exists some $K>0$ such that
\Bes
\PP\left(\sup_{0 \le t \le 1}\xi_{\e}(t)>K\right) \le \frac{\E\sup_{0 \le t \le 1}\xi_{\e}^{2}(t)}{K^2}=c
\Ees
and thus
\Bes
\PP\left(\sup_{0 \le t \le 1}\xi_{\e}(t) \le K\right) \ge 1-c.
\Ees
For any $0 \le t_1<t_2<...<t_n \le 1$ with $n \in \N$, we have
$$\PP\left(\xi_{\e}(t_1) \le K,...,\xi_{\e}(t_n) \le K\right) \ge 1-c.$$
Since $c>0$ is arbitrary, the distribution of $(\xi_{\e}(t_1),...,\xi_{\e}(t_n))$ is tight. Hence, (i) above holds.

Next we check that (ii) also holds. Observe
\Be  \label{e:Tig2}
\begin{split}
\sup_{|s-t| \le \delta} |\xi_\e(t)-\xi_\e(s)| & \le \delta \left[\sup_{0 \le t \le 1} \xi_\e(t)+\sup_{0 \le t \le 1} \frac{\overline{u_\e^{\alpha}}(t)}{\xi_\e^{\beta}(t)}\right] \\
&+\sup_{|s-t|\le \delta} \left|\int_s^t \xi_\e(r) \dot h_\e(s)\dif s\right|+2 \sqrt{\e} \sup_{0 \le t \le 1} \left|\int_0^t \xi_\e \dif B_s\right|.
\end{split}
\Ee
By H\"{o}lder inequality, we get
\Bes
\begin{split}
\sup_{|s-t|\le \delta} \left|\int_s^t \xi_\e(r) \dot h_\e(s)\dif s\right| & \le \sup_{|s-t|\le \delta} \left[\int_s^t \xi^2_\e(r) \dif s\right]^{\frac 12} \|h_\e\|_H \le N \sqrt{\delta} \sup_{0 \le t \le 1} \xi_\e(t),
\end{split}
\Ees
which, together with \eqref{e:SupXiEEst}, yields
\Be \label{e:XiHeEst}
\begin{split}
\E \sup_{|s-t|\le \delta} \left|\int_s^t \xi_\e(r) \dot h_\e(s)\dif s\right|  \le C \delta^{\frac 12}.
\end{split}
\Ee
Observe
\Bes
\begin{split}
\sup_{0 \le t \le 1} \frac{\overline{u_\e^{\alpha}}(t)}{\xi_\e^{\beta}(t)} \le \left(\sup_{0 \le t \le 1} \overline{u_\e^{\alpha}}(t)\right)
\left(\inf_{0 \le t \le 1} {\xi_\e^{-\beta}(t)}\right),
\end{split}
\Ees
by \eqref{e:UAlpEst}, \eqref{e:XiNuEst} and H\"{o}lder inequality, this further gives
\Be \label{e:ExpU/Xi}
\begin{split}
\E \sup_{0 \le t \le 1} \frac{\overline{u_\e^{\alpha}}(t)}{\xi_\e^{\beta}(t)} \le C.
\end{split}
\Ee
Moreover, by H\"{o}lder and martingale inequalities and It$\hat{o}$ identity we get
\Be  \label{e:MarTerm}
\begin{split}
\E \sqrt{\e} \sup_{0 \le t \le 1} \left|\int_0^t \xi_\e \dif B_s\right| & \le  \sqrt{\e}\left[\E \sup_{0 \le t \le 1} \left|\int_0^t \xi_\e \dif B_s\right|^2 \right]^{\frac 12} \\
& \le \sqrt{2\e}\left[\E\left|\int_0^1 \xi_\e \dif B_s\right|^2 \right]^{\frac 12}=\sqrt{2\e}\left[\int_0^1 \E|\xi_\e|^2 \dif s \right]^{\frac 12} \le C \sqrt{\e}
\end{split}
\Ee
where the last inequality is  by \eqref{e:SupXiEEst}. Combining \eqref{e:SupXiEEst}, \eqref{e:ExpU/Xi}, \eqref{e:XiHeEst}, \eqref{e:MarTerm} with \eqref{e:Tig2}, we immediately obtain
\Bes
\begin{split}
\E \sup_{|s-t| \le \delta} |\xi_\e(t)-\xi_\e(s)| \le C(\delta+\sqrt{\delta}+\sqrt{\e}).
\end{split}
\Ees
By Chebyshev inequality,
\Bes
\PP \left\{\sup_{\stackrel{0 \le s<t \le 1}{|t-s| \le \delta}} |\xi_\e(t)-\xi_\e(s)|>\lambda\right\} \le C\lambda^{-1}(\delta+\sqrt{\delta}+\sqrt{\e}),
\Ees
which immediately implies (ii).

\end{proof}

\subsection{Proof of the large deviation theorem}
\begin{proof}
By Theorem 4.4 in \cite{BD}, and Proposition \ref{p:LDP2} and Proposition \ref{p:LDP3}, we can obtain Theorem \ref{T:LDP}.
The $I$ in the theorem is an immediate consequence of \cite[(4.3)]{BD}.
\end{proof}

\section{Discussion and Outlook}



Finally, let us mention some directions of our future research on the stochastic Gierer-Meinhardt system. Some important questions have been left open in this study and we plan to explore them next. When does blow-up of solutions occur? Can related results be derived for stochastic processes other than one-dimensional standard Brownian motion? Can our results be extended from the stochastic shadow Gierer-Meinhardt system to the full Gierer-Meinhardt system? Do similar results hold for other pattern-forming systems such as the Gray-Scott or Schnakenberg models?

 For pattern formation in the deterministic Gierer-Meinhardt model many interesting phenomena have been established such as Turing instability, peaked steady states with single or multiple spikes, and various kinds of bifurcations. We are interested in the question what will happen if some random forces are added to these models.  Due to the randomness in the system, the peaked patterns and their bifurcations will be random rather than deterministic and we expect that the nature of their interactions will change. Depending on the exact conditions, they can be destabilised by the stochastic effects and new patterns can emerge. Our next goal is to investigate the trajectories of random patterns and their bifurcations and gain further insight into the mechanisms controlling these interactions (\cite{wx14}).

\end{document}